\newtheorem{defn}{Definition}
\newtheorem{thm}{Theorem}
\newtheorem{prop}{Proposition}
\newtheorem{lemma}{Lemma}
\newtheorem{remk}{Remark}
\newtheorem{exam}{Example}
\newcommand{\pp}{\boldsymbol{p}}
\newcommand{\cT}{\mathcal{T}}
\newcommand{\isom}{\operatorname{Isom{}_+}}
\def\sideremark#1{\ifvmode\leavevmode\fi\vadjust{\vbox to0pt{\vss% the remark
 \hbox to 0pt{\hskip\hsize\hskip1em%                          will appear only
 \vbox{\hsize3cm\tiny\raggedright\pretolerance10000%          on the side
  \noindent #1\hfill}\hss}\vbox to8pt{\vfil}\vss}}}%
\keywords{Projective differential geometry, hyperbolic manifolds, higher dimensional Kleinian groups, group cohomology, holonomy, tractor
  calculus, Cartan connections}
\author{A.\ Rod Gover and Callum Sleigh}
\title[Tractors, BGG complexes, and group  cohomology]{Tractor calculus, BGG complexes, and the cohomology of Kleinian groups}
\address{A.R.G. and C.S.: Department of Mathematics\\
   The University of Auckland\\
   Private Bag 92019\\
   Auckland 1142\\
   New Zealand}
 \email{r.gover@auckland.ac.nz}
 \email{c.sleigh@auckland.ac.nz}
\subjclass[2010]{Primary 53A20, 22E40, 58J10, 30F40; Secondary 53C29,
  35N10, 11F75, 53A30}
\begin{document}
\maketitle

\begin{abstract} 
For a compact, oriented, hyperbolic $n$-manifold $(M,g)$, realised as
$M= \Gamma \backslash \mathbb{H}^{n}$ where $\Gamma$ is a torsion-free
cocompact subgroup of $SO(n,1)$, we establish and study a relationship
between differential geometric cohomology on $M$ and algebraic
invariants of the group $\Gamma$. In particular for $\mathbb{F}$ an
irreducible $SO(n,1)$-module, we show that the group cohomology with
coefficients $H^{\bullet}(\Gamma, \mathbb{F})$ is isomorphic to the
cohomology of an appropriate projective BGG complex on $M$.  This
yields the geometric interpretation that $H^{\bullet}(\Gamma,
\mathbb{F})$ parameterises solutions to certain distinguished natural
PDEs of Riemannian geometry, modulo the range of suitable differential coboundary
operators. Viewed in another direction, the construction shows one way that
non-trivial cohomology can arise in a BGG complex, and sheds
considerable light on its geometric meaning.  We also use the tools
developed to give a new proof that $H^{1} (\Gamma, S_{0}^{k}
\mathbb{R}^{n+1}) \neq 0$ whenever $M$ contains a compact, orientable,
totally geodesic hypersurface. All constructions use another result
that we establish, namely that the canonical flat connection on a
hyperbolic manifold coincides with the tractor connection of
projective differential geometry.
\end{abstract}
%% **************
\maketitle

\thanks{ARG gratefully acknowledges support from the Royal
  Society of New Zealand via Marsden Grant 13-UOA-018 }

\section{Introduction}%%%%%%%%%%%%%%%%%%%%%%%%%%%%%%%%%%%%%%%%%%%%%%%%

Hyperbolic manifolds and their fundamental groups are objects that lie
at the intersection of many different areas of mathematics.  Indeed, their study has exposed deep interactions between analysis, group
theory, Riemannian geometry, and topology, see for example
\cite{Gromov,Milnor,Thurston,BenedettiPetronio}.  In this article we
show that there is considerable gain in introducing another tool to study hyperbolic manifolds; namely, \emph{projective differential geometry}.
We show that many 
standard objects in the area may be effectively interpreted in this
framework.  Critically, this perspective enables us to apply powerful
tools such as the Cartan-tractor calculus and BGG theory
\cite{BailEastGov,CaldDiemer,CapGover,CSS}; these provide a route
for the geometric interpretation of certain algebraic invariants of hyperbolic
geometries.

Projective differential geometry is based around geodesic structure.
Two affine connections, $\nabla$ and $\nabla'$, are said to be
projectively related if they share the same geodesics as
unparameterised curves. This happens trivially if connections differ
only by torsion, so we take a {\em projective geometry} to mean a
manifold equipped with an equivalence class $\pp=[\nabla]$ of
torsion-free affine connections which have the same geodesics up to
parametrisation. In Riemannian geometry the metric canonically
determines a distinguished affine connection, the
Levi-Civita connection, and this provides a basic object for
invariantly treating local geometric analysis. On projective
geometries there is no such distinguished connection on the tangent
bundle.  However there is a canonical connection on a related
higher rank bundle, called the \emph{tractor connection}, and for
projective geometry this is the basic tool for invariant local
analysis.

Since hyperbolic manifolds have a Riemannian metric and Levi-Civita
connection available, our reasons for exploiting the tractor
connection are more subtle.  The first point is that on a hyperbolic
manifold the tractor connection is flat; indeed a hyperbolic manifold
may be characterised as a flat projective manifold equipped with a
certain Cartan holonomy reduction in the sense of
\cite{CapGoverHammerl,CGH}. In the hyperbolic geometry literature it
is well known that the hyperbolic metric gives rise to another canonical flat connection, determined by the 
faithful representation of the fundamental group into the group of hyperbolic isometries. We show that this agrees with the tractor
connection, and the identification of these
connections is used to give a rich interplay between results in
projective differential geometry and results for (higher dimensional)
Kleinian groups.  In particular, we use the projective tractor calculus
and the related BGG theory (as discussed below) to study the
cohomology, with coefficients, of the fundamental group of a
hyperbolic manifold.

The work here also has motivation from another direction; namely, the
general role of projective geometry in pseudo-Riemannian geometry.
Clearly any pseudo-Riemannian geometry determines a projective
structure through its Levi-Civita connection. However a deeper role of
projective geometry in the metric setting has emerged \cite{Mikes,Sinjukov}, with already some striking developments
and applications \cite{BDE,KM,matv}. A particularly promising aspect is a
strong link with certain overdetermined partial differential equations
and their close connections to the invariant calculus for projective
geometry \cite{CGprojectiveCom,CapGoverHammerl,EastMat}. The current
work provides a new direction in this programme.

Since the work here brings together different areas of mathematics we
have attempted to present the material in an elementary way and, to
the extent possible, make the treatment self-contained.
 
Before discussing our results in more detail, let us point out that
there are two directions in which the work here should
generalise. First compact hyperbolic manifolds form our main focus
here, but much of the development applies equally to the case of
infinite volume hyperbolic manifolds; this should be especially
interesting because of the relation to projective compactification
defined in \cite{CGprojectiveCom}. Second, many of the constructions in this article will generalise without
difficulty to many classes of manifolds that arise by discrete quotients
of homogeneous spaces.  We view our treatment here as a template for the first
aspects of these wider theories.

\subsection{Main results}\label{mr}

We work on a compact, oriented, hyperbolic $n$-manifold $(M,g)$, $n\geq
2$, realised as $M= \Gamma \backslash \mathbb{H}^{n}$ where $\Gamma$
is a torsion-free cocompact subgroup of $SO(n,1)$. As mentioned above, it is well known 
that for any $SO(n,1)$-module $\mathbb{F}$ there is canonical vector
bundle and connection $(F,\nabla^{dev})$ (with typical fibre
$\mathbb{F}$) with the property that the holonomy of $\nabla^{dev}$
recovers the representation of $\Gamma$ on $\mathbb{F}$ (see, for example, Part $1$ of \cite{MatsushimaMurakami} or Section
\ref{locvsTr} below). On the other hand, since the metric $g$
determines a projective structure, for any $SL(n+1,\mathbb{R})$-module
$\mathbb{E}$ there is a corresponding projective tractor bundle
$\mathcal{T}_{\mathbb{E}}$ on $M$ with projective tractor connection
$\nabla^{\mathcal{T}_{\mathbb{E}}}$. The first main result is Theorem
\ref{MainIsom} which states that these connections agree in the case
that $\mathbb{F}$ is the restriction of $\mathbb{E}$ to $SO(n,1)
\subset SL(n+1,\mathbb{R})$. We paraphrase:

\smallskip

\noindent {\bf Theorem.} 
{\it There is a canonical isomorphism of vector bundles and \textup{(}flat\textup{)} connections,}
\[
(F, \nabla^{dev}) \cong (\mathcal{T}_{\mathbb{E}}, \nabla^{\mathcal{T}_{\mathbb{E}}}).
\]

\medskip

Projective geometries fit into a large class of structures known as 
parabolic geometries. Within this class there are fundamental
differential sequences known as BGG sequences.  Building on work of
Baston \cite{Baston1, Baston2} and others, the general theory of these was
developed by \v{C}ap, Slov\`{a}k, Sou\v{c}ek \cite{CSS} and
Calderbank, Diemer \cite{CaldDiemer}; they are intimately related to
the algebraic Bernstein-Gelfand-Gelfand resolutions of
\cite{BGG,Lepowsky}.  Part of the importance of these sequences arises
from the fact that they form complexes when the geometry is flat in
the sense of its Cartan connection.  While there has been considerable
study of the local structure of these sequences, see e.g.\ \cite{CS2}, and
also geometric interpretation of the solution space for the first
operators \cite{Armstrong,CapGoverHammerl,CGH,Leitner}
there has been
little known about the higher cohomology of BGG complexes. Another
key motivation of this article is to make some first inroads to 
address this gap. 

From the general theory it follows that on a hyperbolic manifold there is, for each  irreducible representation  $\mathbb{E}$
of $SL(n+1)$, a projective BGG complex that takes the form,
\newcommand{\cB}{\mathcal{B}}
$$%\begin{equation}\label{BGG}
  0 \longrightarrow \cB^0 \stackrel{D_0}{\longrightarrow} \cB^1\stackrel{D_1}{\longrightarrow} \cdots \stackrel{D_{n-1}}{\longrightarrow} \cB^n 
\longrightarrow  0,
$$%\end{equation} 
where the $\cB^i$ are irreducible (weighted) tensor bundles.  Each of
these complexes may be derived from a twisting of the de Rham complex by
the corresponding (flat) tractor connection, and computes the same
cohomology as the tractor twisted de Rham complex.  Using this fact, and
the isomorphism from Theorem \ref{MainIsom} discussed above brings
us to the next main result, Theorem \ref{BGGIsomGroup}, which may be
summarised as: 

\smallskip

\noindent {\bf Theorem.}  {\it The cohomology of the projective
  Bernstein-Gelfand-Gelfand complex associated to $\mathbb{E}$ is
  isomorphic to the group cohomology $H^{\bullet}(\Gamma,
  \mathbb{F})$.}

\medskip

The basic BGG theory that we need is reviewed in Section \ref{BGGsec}
with examples in Section \ref{BGGex}.  Theorem \ref{BGGCohom} and
Theorem \ref{BGGIsomGroup} provide a nontrivial geometric
interpretation of the group cohomology of $\Gamma$, with 
coefficients in any $SO(n,1)$-module $\mathbb{F}$. 
Apart from certain isolated cases
a truly geometric interpretation of the group cohomology of $\Gamma$
with coefficients has been unknown until now, to the best of our
knowledge.

Finally, in section \ref{MainNonVanishing}, we use tractor-theoretic methods to give a proof of the following non-vanishing theorem. Here, and throughout, we will use the notation $S^{k}_{0} \mathbb{R}^{n+1}$ 
for the $SO(n,1)$-module formed by taking the trace-free part of the $k$th symmetric power of the defining representation. The trace comes from the signature $(n,1)$ inner product on $\mathbb{R}^{n+1}$.

\noindent {\bf Theorem.}
{\it If $\Gamma \backslash \mathbb{H}^{n}$ is a compact, orientable, $n$-dimensional hyperbolic manifold, with a compact, orientable, totally geodesic hypersurface, $\iota: \Sigma \hookrightarrow M$, then,}
\[
H^{1}(\Gamma, S^{k}_{0} \mathbb{R}^{n+1}) \neq 0.
\]

\medskip

This theorem has been proven by Millson \cite{Millson1} using completely different methods
(the action of $\Gamma$ on trees) for the same set of possible
coefficients as us. For the case of coefficients in
$\mathbb{R}^{n+1}$, Lafontaine \cite{Lafontaine} gave a proof using
Riemannian geometry.  Our use of the Poincar\'{e} dual of totally geodesic submanifolds to give
elements of $H^{\bullet}( \Gamma, \mathbb{F})$ was inspired by the article
{\cite{Millson3}}.

\medskip

The authors would like to thank Andreas \v{C}ap for useful conversations.

\section{Projective differential geometry and hyperbolic manifolds.}
%%%%%%%%%%%%%%%%%%%%%%%%%%%%%%%%%%%%%%%%%%%%%%%%%%%%%%%%%%%%%%%%%%%%

On a hyperbolic manifold it is particularly simple to construct the 
 Cartan bundle and the associated tractor
connections, and we show this here.

\subsection{The homogeneous model for projective geometry}\label{ProjectiveSphere}
We recall from the introduction:
\begin{defn}
A \underline{projective structure} $\pp$ on a manifold $M$ is an equivalence
class of torsion-free connections on $TM$, defined by the equivalence
relation: $\nabla \sim \tilde{\nabla}$ if and only if $\nabla$ and
$\tilde{\nabla}$ have the same unparameterised geodesics.
\end{defn}
Throughout, the group $SL(n+1,\mathbb{R})$ will be identified with its defining representation on $\mathbb{R}^{n+1}$ and usually denoted $G:=SL(n+1,\mathbb{R})$; 
the corresponding Lie
algebra will likewise be denoted $\mathfrak{g} := \mathfrak{sl}
(n+1,\mathbb{R})$.
We write $\mathbb{S}^{n}$ to denote the $n$-sphere equipped with 
the projective structure $[\nabla^r]$, where  $r$ is the round metric on the $n$-sphere and $\nabla^r$ the corresponding
Levi-Civita connection.
Now the $n$-sphere may be  identified with the set of oriented rays in
$\mathbb{R}^{n+1}$, so the group $G$ is seen to act on
$\mathbb{S}^{n}$, and sends great circles to great circles, so that
the projective structure is preserved. Conversely, any projective
transformation of an open connected subset of $\mathbb{S}^{n}$ can be
extended to an element of $G$.

 The stabiliser, in $G$, of the timelike ray
$\{(0,0,\ldots,0,t), t >0 \} \subset \mathbb{R}^{n+1}$ is a parabolic
subgroup $P \subset G$ and so the projective sphere $\mathbb{S}^{n}$
is a homogeneous space (in fact a flag manifold),
\[
\mathbb{S}^{n} \cong G / P.
\]
Thus the projective structure of $\mathbb{S}^{n}$ is closely tied 
to the properties of the group $G$, and the homogeneity of $G/P$. 
It is possible to formalise a way in which an
arbitrary manifold with a projective structure can be infinitesimally 
``compared'' to $\mathbb{S}^{n}$; the local failure of homogeneity is
measured in the curvature of an object called the \emph{Cartan
  connection}.

A manifold arising as a quotient of $\mathbb{S}^{n}$ by a discrete
group of projective transformations is clearly projectively equivalent
to $\mathbb{S}^{n}$ in a neighbourhood of any point, so in this case
the curvature of the Cartan connection vanishes. Discrete quotients of
this form are our objects of study, so here we will
 be dealing with flat Cartan connections. In the following
section we will discuss the aspects of 
Cartan geometry that we need.

\subsection{The Cartan bundle and connection}\label{Ct}
Observe that $G$ may be viewed as the total space of a principal $P$-bundle $G\to G/P\cong
\mathbb{S}^{n}$ that encodes the structure of $\mathbb{S}^{n}$ as a
homogeneous space. 
 Furthermore, recall that, as with any Lie group, the total space $G$ is canonically equipped with an equivariant
$\mathfrak{g}$-valued one form known as the Maurer-Cartan form
$\omega_{\rm MC}$; this is characterised by its identification of
left-invariant vector fields with elements of $T_eG$, the tangent
space at the identity.  Because of its left invariance, $\omega_{\rm MC}$ descends
to a well-defined $\mathfrak{g}$-valued one-form on discrete quotients
of $G$, with equivariance properties as stated here
(cf.\ \cite{Sharpe} chapter $4$, section $3$). 
\newcommand{\cG}{\mathcal{G}}
\newcommand{\cGG}{\mathcal{G}_{\Gamma}}
\newcommand{\om}{\omega}
\begin{lemma}\label{TractorConstruction}
If $\Gamma$ is a discrete subgroup of $G$, and $U
\subset G/P$ is an open set with the property that $M:=\Gamma \backslash U$
is a manifold, then $\cG_{\Gamma}:=\Gamma \backslash G$ is a principal $P$-bundle
over $\Gamma \backslash U$ and the Maurer-Cartan form $\omega_{\rm MC}$ induces a
$\mathfrak{g}$-valued one-form $\omega$ on $\cGG$ such
that,
\begin{enumerate}\label{axioms}
\item $R_{p}^{*} \omega = Ad(p^{-1}) \omega$ for all $p \in P$.
\item $\omega(u): T_{u}G \rightarrow \mathfrak{g}$ is a linear isomorphsim, for all $u \in \Gamma \backslash G$.
\item $\omega(\zeta_{X}(u))=X$, for all $u \in \Gamma \backslash G$, $X \in \mathfrak{p}$, and $\zeta_{X}$ the fundamental vector field generated by $X$.
\end{enumerate}
\end{lemma}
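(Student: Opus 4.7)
The approach is to reduce each claim to a corresponding classical fact about $G$ itself, exploiting that the left $\Gamma$-action and the right $P$-action on $G$ commute, and that the Maurer-Cartan form is built from left translations.

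First I would set up the principal bundle structure. Let $\pi: G \to G/P$ be the canonical projection, which is already a principal $P$-bundle. The left multiplication by $\Gamma \subset G$ on $G$ preserves the fibres of $\pi$, because left and right multiplication commute. Restricting to $\pi^{-1}(U)$, one must check that $\Gamma$ acts freely and properly discontinuously so that the quotient $\cGG := \Gamma \backslash \pi^{-1}(U)$ is a smooth manifold: freeness is immediate from freeness of group multiplication, and proper discontinuity is inherited from the corresponding property on $U$ (which is part of the hypothesis that $\Gamma \backslash U$ is a manifold) via the $\Gamma$-equivariant submersion $\pi$. The right $P$-action then descends intact to $\cGG$, giving it the structure of a principal $P$-bundle over $\Gamma \backslash U$.

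Next I would descend the form. Since $\om_{\rm MC}$ is left-invariant on $G$, it is in particular invariant under the left $\Gamma$-action, so it passes to a well-defined $\mathfrak{g}$-valued one-form $\om$ on $\cGG$. Because the quotient map $q: \pi^{-1}(U) \to \cGG$ is a local diffeomorphism, every pointwise algebraic property of $\om_{\rm MC}$ transfers verbatim to $\om$. Thus for (2), $\om(u): T_u\cGG \to \mathfrak{g}$ is a linear isomorphism because the same is true for $\om_{\rm MC}$ (it is the standard trivialisation of $TG$ by left translation). For (3), the fundamental vector field $\zeta_X$ on $\cGG$ generated by $X \in \mathfrak{p}$ is $q$-related to the fundamental vector field of the same name on $G$ (both come from the right $P$-action), and $\om_{\rm MC}(\zeta_X) = X$ is standard. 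For (1), the classical identity $R_p^* \om_{\rm MC} = \mathrm{Ad}(p^{-1}) \om_{\rm MC}$ on $G$ descends directly, since the right action of $P$ commutes with the left action of $\Gamma$, so both sides of the identity are $\Gamma$-invariant and reduce to the corresponding identity for $\om$ on $\cGG$.

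The main obstacle is the verification of the principal bundle structure, specifically proper discontinuity of the $\Gamma$-action on the open set $\pi^{-1}(U) \subset G$. Everything else is a routine transcription of Lie-theoretic properties of the Maurer-Cartan form through a covering map, and should go through essentially formally once the bundle $\cGG \to \Gamma \backslash U$ is in place.
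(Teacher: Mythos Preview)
Your proposal is correct and follows essentially the same approach as the paper: both arguments descend $\omega_{\rm MC}$ to the quotient via its left-invariance and then read off properties (1)--(3) from the corresponding classical identities on $G$. If anything, you are slightly more careful than the paper about the principal bundle structure (the paper simply asserts that ``it is clear that $\cGG\to M$ is a principal bundle with fibre $P$'' without discussing proper discontinuity on $\pi^{-1}(U)$), but the mathematical content is the same.
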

\begin{proof}
 We work over the inverse image of $U$ in $G$, and take $\cGG$ to mean
 the quotient of this by $\Gamma$.  Since $\Gamma$ acts on the left it
 is clear that $\cGG\to M$ is a principal bundle with fibre $P$.  Note
 that a vector field $X$ on $\cGG$ may be thought of as
 a vector field $\tilde{X}$ on $G$ with the property that
\begin{equation}\label{period}
T_\gamma (\tilde{X}_g)= \tilde{X}_{\gamma g} 
\end{equation}
for all $g\in G$ (in the open set over $U$) and $\gamma\in \Gamma$. 
Thus, if $[g]$ denotes the equivalence class of $g$ in $\cGG$, 
 we define $\omega([g])(X_{[g]})$ by 
$$
\omega ([g])(X_{[g]}):= \omega_{MC}( g)(\tilde{X}_{g}).
$$ 
This defines $\omega $ on $\cGG$ because, from (\ref{period}) and 
 the left invariance of $\omega_{MC}$, we have
$\omega_{MC}( g)(\tilde{X}_{g})=
\omega_{MC}(\gamma g)(\tilde{X}_{\gamma g})$ for any $\gamma \in \Gamma$.

Now the properties 1,2, and 3 follow at once from the equivariance of
$\omega_{\rm MC}$. 
\end{proof}

\begin{remk}\label{mgen} Note that the Lemma applies to any group $G$, with closed Lie subgroup $P$, and discrete subgroup $\Gamma$, where the latter  satisfies that $\Gamma\backslash U$ is a manifold $M$. 
The principal bundle structure and the properties 1,2, and 3 of the
Lemma mean that $(\cGG,\om)$ is a Cartan \textup{(}bundle, connection\textup{)} pair for
$M$.
\end{remk}

 For the case that $G=SL(n+1)$ and $P$ the parabolic subgroup,
 as described earlier, these groups identify the structure $(\cGG,\om)$ as a projective  Cartan
 connection. Furthermore the Cartan connection $\om$ given
here is the \emph{normal} Cartan connection for the bundle $\mathcal{G}_{\Gamma}$, in the sense of \cite{CS}, 
because $\omega$ agrees  locally with
 the Maurer-Cartan form. Note that a Cartan connection differs
from the usual notion of a principal $P$-connection; for example the
$\mathfrak{g}$-valued one-form $\omega$ has no kernel, so there is no
horizontal distribution.

Given finite-dimensional $P$-representation $\rho:
P \rightarrow GL(\mathbb{E})$ 
we can  form an
associated vector bundle over $M$,
\begin{equation}\label{ind}
\mathcal{G}_{\Gamma} \times_{\rho} \mathbb{E}.
\end{equation}
For example, it follows easily from Lemma \ref{TractorConstruction}
that the tangent bundle arises from the case that $\mathbb{E}$ is
$\frak{g}/\frak{p}$, where $\frak{p}:=\operatorname{Lie}(P)$ and
$\rho$ is induced from the restriction to $P$ of the Adjoint
representation. Thus all tensor bundles arise as associated bundles of
the form (\ref{ind}).  Unfortunately the properties of the Cartan
connection $\om$ mean that it does not in general induce a linear
connection on these induced vector bundles.  However there are such
connections on a special class, that we now describe.

\subsection{Tractor bundles and the tractor connection}\label{tract}
A special case of the associated vector bundle construction arises if
$\mathbb{E}$ carries a $G$-representation $\rho: G \rightarrow
GL(\mathbb{E})$, rather than just a $P$-representation. In this case the associated vector bundles 
\[
\mathcal{T}_{\mathbb{E}}:= \mathcal{G}_{\Gamma} \times_{\rho} \mathbb{E},
\]
 are called \emph{projective tractor bundles}
 \cite{BailEastGov,CapGover}. Tractor bundles exist in far greater generality,
 but here we shall only be concerned with those arising in connection
 with propjective geometry, so the adjective ``projective'' will usually be
 omitted in the following. 

In contrast to general associated vector bundles as in (\ref{ind}),
the Cartan connection $\omega$ does induce a linear connection on
tractor bundles \cite[Theorem 2.7]{CapGover}, so that tractor bundles
are of considerable importance. In particular for the connection from
Lemma \ref{TractorConstruction} we have:
\begin{lemma}\label{CapGoverConnection}
For any finite-dimensional $G$-module $\rho: G \hookrightarrow GL(\mathbb{E})$, the Cartan connection $\om$ on $\mathcal{G}_{\Gamma}$ induces a flat linear connection $\nabla^{\mathcal{T}_{\mathbb{E}}}$ on the tractor bundle $\mathcal{T}_{\mathbb{E}}$.
\end{lemma}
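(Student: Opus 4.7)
The plan is to construct $\nabla^{\mathcal{T}_{\mathbb{E}}}$ by the standard Cartan-to-tractor recipe and then deduce flatness from the fact that $\omega$ is locally the Maurer--Cartan form. I will view a section $s$ of $\mathcal{T}_{\mathbb{E}}=\mathcal{G}_{\Gamma}\times_{\rho}\mathbb{E}$ as a $P$-equivariant map $\tilde{s}\colon \mathcal{G}_{\Gamma}\to\mathbb{E}$ satisfying $\tilde{s}(u\cdot p)=\rho(p^{-1})\tilde{s}(u)$. For a vector field $\xi$ on $M$, pick any lift $\tilde{\xi}$ of $\xi$ to $\mathcal{G}_{\Gamma}$, and define
\[
\widetilde{\nabla^{\mathcal{T}_{\mathbb{E}}}_{\xi} s}(u) := \tilde{\xi}(\tilde{s})(u) + \rho_{*}\!\bigl(\omega(\tilde{\xi})(u)\bigr)\tilde{s}(u),
\]
where $\rho_{*}\colon\mathfrak{g}\to\operatorname{End}(\mathbb{E})$ is the Lie algebra representation coming from $\rho$.

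The key point is to check that the right-hand side is independent of the choice of lift $\tilde{\xi}$ and defines a $P$-equivariant $\mathbb{E}$-valued function on $\mathcal{G}_{\Gamma}$. Independence uses property (3) of Lemma \ref{TractorConstruction}: two lifts differ by a vertical vector field $\zeta_{X}$ with $X\in\mathfrak{p}$, and by differentiating the equivariance relation for $\tilde{s}$ one has $\zeta_{X}(\tilde{s})=-\rho_{*}(X)\tilde{s}$, which is precisely cancelled by the algebraic term since $\omega(\zeta_{X})=X$. Equivariance under $R_{p}$ for $p\in P$ follows from property (1) together with the $G$-equivariance of $\rho_{*}$, since right translation converts $\omega$ to $\operatorname{Ad}(p^{-1})\omega$ and $\tilde{s}$ to $\rho(p^{-1})\tilde{s}$, and the two conjugations combine compatibly. $\mathbb{R}$-linearity in $s$ and the Leibniz rule in $\xi$ are immediate from the formula.

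For flatness, the curvature of $\nabla^{\mathcal{T}_{\mathbb{E}}}$ applied to $\xi,\eta$ can be written, with the above formula, as
\[
R^{\mathcal{T}_{\mathbb{E}}}(\xi,\eta)s \;\longleftrightarrow\; \rho_{*}\!\bigl(d\omega(\tilde{\xi},\tilde{\eta}) + [\omega(\tilde{\xi}),\omega(\tilde{\eta})]\bigr)\tilde{s},
\]
which is the standard identification of the induced curvature with the $\mathfrak{g}$-valued curvature two-form $K:=d\omega+\tfrac{1}{2}[\omega,\omega]$ of $\omega$ applied to $\mathbb{E}$ via $\rho_{*}$. Now $\mathcal{G}_{\Gamma}=\Gamma\backslash G$ and $\omega$ is the descent of the Maurer--Cartan form $\omega_{\rm MC}$, which satisfies the Maurer--Cartan equation $d\omega_{\rm MC}+\tfrac{1}{2}[\omega_{\rm MC},\omega_{\rm MC}]=0$. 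Since the quotient map $G\to\mathcal{G}_{\Gamma}$ is a local diffeomorphism identifying $\omega_{\rm MC}$ with $\omega$, we have $K\equiv 0$ on $\mathcal{G}_{\Gamma}$, and hence $R^{\mathcal{T}_{\mathbb{E}}}=0$.

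I expect no serious obstacle here: the construction and the identification of curvatures are routine once one is comfortable with Cartan bundles, and the essential content is that the vanishing of the projective Cartan curvature (which holds automatically because our Cartan bundle is a discrete quotient of a Lie group equipped with its Maurer--Cartan form) passes directly to the tractor connection. The only mildly delicate point is verifying well-definedness of the formula, which reduces to the $\mathfrak{p}$-equivariance of $\tilde{s}$ matching the algebraic term on $\mathfrak{p}\subset\mathfrak{g}$; this is the standard compatibility ensuring that a Cartan connection induces a linear connection precisely on tractor (not general associated) bundles.
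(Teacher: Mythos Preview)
Your proposal is correct and follows essentially the same approach as the paper's (sketched) proof: you define the tractor connection by the same formula $\tilde{X}\cdot\sigma+\rho_{*}(\omega(\tilde{X}))\sigma$ on $P$-equivariant functions, and deduce flatness from the Maurer--Cartan equation. The only difference is that you supply the details of well-definedness and the curvature computation explicitly, whereas the paper defers these to the reference \cite{CapGover}.
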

\begin{proof}(Sketch)
Sections of $\mathcal{T}_{\mathbb{E}}$ can be identified with
$P$-equivariant maps from $\mathcal{G}_{\Gamma}$ to $\mathbb{E}$.  For
any such map $\sigma$, and any $X \in \Gamma(TM)$, choose an arbitrary
lift $\tilde{X}$ of $X$ to $\mathcal{G}_{\Gamma}$ and set,
\[
\widetilde{\nabla^{\mathcal{T}_{\mathbb{E}}}_{X} \sigma } := \tilde{X} \cdot \sigma + \rho(\omega(\tilde{X})) \sigma,
\]
where the action $\cdot$ is just the derivation action of a vector
field on an $\mathbb{E}$-valued function. It can be shown
\cite{CapGover}, that the choice of lift $\tilde{X}$ does not matter,
and that this definition gives a linear connection.

The fact that this connection is flat follows easily from the fact
that, locally, the Maurer-Cartan form $\omega_{\rm MC}$ satisfies the
Maurer-Cartan equations.
\end{proof}
The connection $\nabla^{\mathcal{T}_{\mathbb{E}}}$ on $\mathcal{T}_{\mathbb{E}}$ is called the \emph{(projective) tractor connection}. Apart from the data of the particular representation $\rho$, by construction this is
determined solely by the structure of the homogeneous space $G/P$
and the action of the group  $\Gamma$ on this. So it 
 depends only on the \emph{projective
  geometry} of $M$.

\section{Hyperbolic manifolds and Projective Geometry}\label{hyp-Sec}
We now fit hyperbolic manifolds into the setup developed above. We do
this by first showing that, up to isometry, any hyperbolic manifold 
arises from 
a discrete quotient of an open $SO_{o}(n,1)$ orbit in the projective sphere
$\mathbb{S}^{n}$, where $SO_{o}(n,1)$ is the identity connected
component of $SO(n,1)$.

Once this has been achieved, we  relate the corresponding
projective tractor bundle to the canonical flat bundle familiar in
hyperbolic geometry.
 
\subsection{The Beltrami-Klein model and the projective $n$-sphere}\label{FlatModels}

We use a slight variant of the usual Beltrami-Klein model, as follows.
Let $h$ be the diagonal Lorentzian metric of signature $(n,1)$ on
$\mathbb{R}^{n+1}$ such that, for $x = (x_{1} , \ldots, x_{n+1}) \in \mathbb{R}^{n+1}$,
\[
h(x,x) = x_{1}^{2} + \cdots + x_{n}^{2}  - x_{n+1}^{2}.
\]
 Consider the hyperboloid of two sheets $ \{ x \in
\mathbb{R}^{n+1} \; s.t.\;  h(x,x)=-1 \}$. We will write $\mathbf{I}$ to
denote the positive sheet of this; that is, the sheet consisting of
points with positive last coordinate. This projects onto an open
subset of the space of rays in $\mathbb{R}^{n+1}$. Recall that the space
of rays in $\mathbb{R}^{n+1}$ is naturally diffeomorphic to a standard  $n$-sphere
$\mathbb{S}^{n}$, and the projection, $\mathbb{P}_+(\mathbf{I})$, of
$\mathbf{I}$ is a `cap' of this sphere. The pullback of $h$ along the
projection is a signature $(n,0)$ metric on the cap: this gives the
cap the structure of $n$-dimensional hyperbolic space
$\mathbb{H}^{n}$. Let $\isom(\mathbb{H}^{n})$ denote the group of
orientation-preserving isometries of the Riemannian manifold
$\mathbb{H}^{n}$.

We recover the group picture associated with these structures as
follows. As mentioned earlier we identify $ G=SL(n+1,\mathbb{R})$ with
its linear representation on $\mathbb{R}^{n+1}$.  We identify
$SO(n,1)$ with the subgroup of this linear group preserving the metric
$h$, as an indefinite inner product on $\mathbb{R}^{n+1}$; so
$SO_{o}(n,1)$ is the identity connected component therein.  Then
$\isom(\mathbb{H}^{n}) \subseteq SO_{o}(n,1)$, 
so the
orientation-preserving isometries of the cap form a class of
projective transformations of $\mathbb{S}^{n}$.

\begin{defn} We fix here some notation. Henceforth, $M$ denotes a compact, orientable,
  hyperbolic manifold. We
  use $g$ to denote the hyperbolic metric on $M$, and $\nabla^{g}$ is
  the corresponding Levi-Civita connection on $TM$. We sometimes also
  use $g$ to denote group elements, but no confusion should arise as
  the meaning will always be clear by the context.
\end{defn}

The \emph{developing map} (see, for example, Chapter $B$ of \cite{BenedettiPetronio}) 
gives a faithful representation,
\[
dev: \pi_{1}(M) \rightarrow \isom(\mathbb{H}^{n}),
\]
so that $dev(\pi_{1}(M))$ is a discrete, torsion-free, cocompact subgroup $\Gamma \subset \isom(\mathbb{H}^{n})$, 
and $M$ is isometric to $\Gamma \backslash \mathbb{H}^{n}$; we identify  $M$ with $\Gamma \backslash \mathbb{H}^{n}$.

If $\Gamma $ is such a group, then
evidently $\Gamma$ also embeds in the group of projective
transformations of $\mathbb{S}^{n}$. We identify the group $\Gamma$
with its image under the inclusions $\Gamma \subset SO_{o}(n,1)\subset
G$. Now recall that $G$ acts transitively on $\mathbb{S}^{n}$ and we
write $P$ for the parabolic subgroup fixing a nominated point, so we
may identify $\mathbb{S}^{n}=\mathbb{P}_+(\mathbb{R}^{n+1})$ with
$G/P$, see Section \ref{ProjectiveSphere}.  We write $U:=
\mathbb{P}_+(\mathbf{I})$ and observe this is an open $SO_{o}(n,1)$
orbit of $\mathbb{S}^{n}$.  Then $(U,g)$ is identified with $\mathbb{H}^n$, where $g$ is
the pullback of the metric on $\mathbf{I}$ (and see the remark
below).  As $\Gamma $ acts by
isometries on $\mathbb{H}^n$ the metric descends to the hyperbolic
metric $g$ on $M \cong \Gamma \backslash \mathbb{H}^{n}$.

Since $\Gamma$ is a subgroup of $G$, Lemma
\ref{TractorConstruction} gives that the $P$-bundle
$\mathcal{G}_{\Gamma}$ restricts to a smooth $P$-bundle over $\Gamma
\backslash \mathbb{H}^{n}$. By construction this is the Cartan bundle
for the projective structure determined by the Levi-Civita connection
of $g$, and Lemma \ref{CapGoverConnection} gives the associated
projective tractor bundles and tractor connections on $\Gamma
\backslash \mathbb{H}^{n}$, for any $G$-representation $\mathbb{E}$.

\begin{remk}
The construction above identifies the hyperbolic manifold $(M,g)$ as a
projective manifold $(M,\pp)$ equipped with a Cartan holonomy
reduction, in the sense of \cite{CapGoverHammerl,CGH}. The inner product $h$ induces a parallel signature $(n,1)$ metric on the  tractor bundle $\mathcal{T}_{\mathbb{R}^{n+1}}$ and this gives the holonomy reduction
in this case. As explained in those sources \textup{(}and
see also \cite{Armstrong,GoverMacbeth}\textup{)} such a reduction is closely
linked to an Einstein metric on the underlying manifold. In the case
here this is the hyperbolic metric $g$.
\end{remk}

\subsection{The standard tractor bundle and its dual}
%%%%%%%%%%%%%%%%%%%%%%%%%%%%%%%%%%%%%%%%%%%%%%%%%%

Setting $\mathbb{E}$ to be the defining $G$-representation
$\mathbb{R}^{n+1}$, gives the {\em standard tractor bundle} over
$M$. This will be denoted simply $\mathcal{T}$, rather than
$\mathcal{T}_{\mathbb{R}^{n+1}}$.  Since $\mathcal{T}$ is an
associated bundle to $\mathbb{R}^{n+1}$, tensor products of
$\mathcal{T}$ and its dual are associated vector bundles for tensor
products of $\mathbb{R}^{n+1}$ and its dual.  The notation
$\nabla^{\mathcal{T}}$ will be used for the tractor connection on any
of these.

The basic projective tractor calculus is developed in  
\cite[Section 3]{BailEastGov}. See also \cite{CapGoverMacbeth,Eastwood,GoverMacbeth}.
There is a canonical isomorphism of vector bundles,
\[
\mathcal{T} \cong TM \oplus \mathbb{R},
\]
where $\mathbb{R}$ denotes the trivial line bundle.
For readers who are familiar with projective tractor calculus, in this
isomorphism we have used the canonical scale arising from the
hyperbolic metric to split the tractor bundle and trivialise
projective density bundles.

Using this isomorphism, and the fact that the connection $\omega$ on $\mathcal{G}_{\Gamma}$ is the normal projective Cartan connection, we can use the 
formula from section $3$ of \cite{BailEastGov} to get that the action of the tractor connection is,
\begin{equation}\label{TracFormula}
\nabla^{\mathcal{T}}_{X}
\left( 
\begin{array}{c}
Y \\
 s \\
\end{array}
\right)
=
\left(
\begin{array}{c}
\nabla^{g}_{X}Y + sX \\
X(s) + g(X,Y) \\
\end{array}
\right),  
\end{equation}
where $X,Y \in \Gamma(TM)$ and $s \in C^{\infty}(M)$. In this formula we have also used that the projective Schouten tensor
$\mathbf{P}$ (for the metric projective structure) on a hyperbolic
manifold satisfies $\mathbf{P}=-g$.

The dual of the defining representation $(\mathbb{R}^{n+1})^{*}$
induces the dual standard tractor bundle on $M$. For this there is a similar
canonical isomorphism,
\[
\mathcal{T}^{*} \cong \mathbb{R} \oplus TM^{*},
\]
and a corresponding expression for the tractor connection,
\[
\nabla_{X}^{\mathcal{T}^{*}}
\left(
\begin{array}{c}
s \\ 
\eta \\
\end{array}
\right)
=
\left(
\begin{array}{c}
X(s) -\eta(X)\\
\nabla_{X} \eta - sX^{\flat} \\
\end{array}
\right),
\]
for $\eta \in \Gamma(TM^{*})$, $s \in C^{\infty}(M)$, $X \in \Gamma(TM)$, and where
$X^{\flat} = g(\cdot,X)$.

The vector bundle decompositions and formulae in these examples imply
similar formulae for the tractor bundles over $M$
associated to any tensor product of $\mathbb{R}^{n+1}$ and its dual.

\subsection{The canonical flat connection and the tractor connection}\label{locvsTr}
Now we will construct the usual canonical flat connection for
hyperbolic geometry, as mentioned in the introduction. In fact,
although we will not use it here, the construction applies far more
generally and is standard in the study of locally symmetric spaces,
(see, for example, \cite{MatsushimaMurakami}).

Given a faithful $SO(n,1)$-module $\mathbb{F}$, composition with $dev:
\pi_{1}(M) \hookrightarrow \isom(\mathbb{H}^{n})$ yields a
representation (which we will still denote $dev$), $dev: \pi_{1}(M)
\hookrightarrow GL(\mathbb{F})$. This gives rise to a vector bundle $\pi: F \rightarrow M$ with total space
\[
F := \mathbb{H}^{n} \times_{dev} \mathbb{F},
\]
where the universal covering space $\mathbb{H}^{n}$ is to be thought of
as a left principal $\pi_{1}(M)$-bundle over $M$. The notation on the
right-hand-side is understood as follows.  The group $\pi_{1}(M)\cong
\Gamma$ acts on $\mathbb{H}^{n}$ on the left, and we are using $\gamma \cdot x$
to indicate the action of an element of $\gamma \in \Gamma \cong
\pi_{1}(M)$ on a point $x \in \mathbb{H}^{n}$. Then  
\[
\mathbb{H}^{n} \times_{dev} \mathbb{F}: = (\mathbb{H}^{n} \times \mathbb{F}) /  \sim, 
\]
with the equivalence relation $\sim$ on points $(x,v) \in
\mathbb{H}^{n} \times \mathbb{F}$ defined by,
\[
(x,v) \sim (\gamma \cdot x, dev( \gamma ) v), \; \forall \gamma \in \Gamma.
\]
This differs from the usual quotient in the associated vector bundle
construction because of the left, rather than right action of
$\pi_{1}(M)$. By construction $F$ is thus equal to $\Gamma \backslash
(\mathbb{H}^{n} \times \mathbb{F})$, with the left action of $\Gamma$ as in
the definition of the equivalence relation $\sim$.

Note that a section $f\in \Gamma(\mathbb{H}^{n} \times_{dev}
\mathbb{F})$ is equivalent to a function $\tilde{f}:\mathbb{H}^{n} \to
\mathbb{F}$ with the equivariance property,
\begin{equation}\label{equiv}
\tilde{f}(\gamma\cdot x)= dev(\gamma) \tilde{f}(x),
\end{equation}
for all $x\in\mathbb{H}^{n} $ and for all $\gamma\in \Gamma$.
Similarly a tangent vector field $v$ on $M$ is represented by
$\tilde{v}\in \Gamma(T\mathbb{H}^n)$ such that
$$
\tilde{v}_{\gamma \cdot x}= T_\gamma (\tilde{v}_x) .
$$ 
It follows that the derivative $\tilde{v} \tilde{f}$, of $\tilde{f}$, is again equivariant in the sense of (\ref{equiv}), that is 
$$
(\tilde{v}\tilde{f})(\gamma\cdot x)= dev(\gamma) (\tilde{v}\tilde{f})(x),
$$ for all $x\in\mathbb{H}^{n} $ and for all $\gamma\in \Gamma$.  This
means that the canonical trivial connection on $\mathbb{H}^{n} \times
\mathbb{F}$ descends to a (flat) connection $\nabla^{dev}$ on $F$,
over $M$. We shall call the pair $(F,\nabla^{dev})$ the canonical flat
connection associated to $\mathbb{F}$. By construction, the holonomy
of the connection $\nabla^{dev}$ is, up to conjugation, the faithful
representation of $\Gamma$ on $\mathbb{F}$.

Recall that $M$ can be thought of as a subset of $\Gamma \backslash
\mathbb{S}^{n}$, and $\Gamma\backslash G$ restricts to give the smooth
$P$-bundle $\mathcal{G}_{\Gamma}$ over $M$.  We used this in Lemma
\ref{CapGoverConnection} to define tractor bundles.
\begin{thm}\label{MainIsom}
If $\rho: SL(n+1,\mathbb{R}) \rightarrow GL(\mathbb{E})$ is a finite dimensional representation, and $\mu: SO(n,1) \rightarrow GL(\mathbb{F})$ is a finite dimensional 
representation with $\rho|_{SO(n,1)} \cong \mu$, then there is a canonical isomorphism of flat vector bundles,
\[
(F,\nabla^{dev}) \stackrel{\simeq}{\longrightarrow} (\mathcal{T}_{\mathbb{E}},\nabla^{\mathcal{T}_{\mathbb{E}}}).
\]
\end{thm}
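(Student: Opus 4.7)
My plan is to prove the theorem by pulling both bundles back to the universal cover $\mathbb{H}^n$, where each becomes trivializable, and then checking that the resulting isomorphism intertwines both the connections and the $\Gamma$-actions. On the one hand, the pullback of $F$ to $\mathbb{H}^n$ is tautologically the trivial bundle $\mathbb{H}^n \times \mathbb{F}$ with trivial connection $d$ and left $\Gamma$-action $(x,v)\mapsto (\gamma\cdot x, \mu(\gamma)v)$. On the other hand, writing $\widetilde{G}:=\pi^{-1}(U)$ for $\pi:G\to G/P\cong \mathbb{S}^n$, the pullback of $\mathcal{T}_{\mathbb{E}}$ to $U\cong \mathbb{H}^n$ is $\widetilde{G}\times_P\mathbb{E}$, with connection induced by the Maurer-Cartan form of $G$ as in Lemma \ref{CapGoverConnection}, and $\Gamma$-action by left multiplication on $\widetilde{G}$. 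The task reduces to constructing a canonical isomorphism between these two pictures over $\mathbb{H}^n$ intertwining all structures.

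The central tool is the holonomy reduction provided by $SO_o(n,1)$. Since $SO_o(n,1)$ acts transitively on $U$ with isotropy $SO(n)=SO_o(n,1)\cap P$, the inclusion $SO_o(n,1)\hookrightarrow \widetilde{G}$ is an $SO(n)$-reduction of the principal $P$-bundle $\widetilde{G}\to U$, so the standard associated-bundle arguments give
\[
\widetilde{G}\times_P\mathbb{E}\;\cong\; SO_o(n,1)\times_{SO(n)}\mathbb{E}\;\cong\; SO_o(n,1)\times_{SO(n)}\mathbb{F},
\]
where the second identification uses the intertwiner realising $\rho|_{SO(n,1)}\cong\mu$. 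Because $\mathbb{F}$ carries an $SO_o(n,1)$-representation (not merely an $SO(n)$-representation), the usual trivialisation of homogeneous bundles applies: the map $[g,v]\mapsto(g\cdot x_0,\mu(g)v)$, with $x_0=(0,\ldots,0,1)\in\mathbb{H}^n$, provides a canonical isomorphism $SO_o(n,1)\times_{SO(n)}\mathbb{F}\cong \mathbb{H}^n\times\mathbb{F}$. Composing yields the desired isomorphism on pulled-back bundles, and $\Gamma$-equivariance is automatic from $\Gamma\subset SO_o(n,1)$ acting by left multiplication throughout, so the whole map descends to $M$.

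The main obstacle will be verifying that under this chain of identifications the tractor connection on $\mathcal{T}_{\mathbb{E}}$ corresponds to the flat connection $d$ on $\mathbb{H}^n\times\mathbb{F}$. The key observation is that when one restricts the Cartan form $\omega$ from $\widetilde{G}$ to $SO_o(n,1)\subset \widetilde{G}$, it agrees with the Maurer-Cartan form of $SO_o(n,1)$, valued in $\mathfrak{so}(n,1)\subset\mathfrak{g}$; pushing this through $d\mu=d\rho|_{\mathfrak{so}(n,1)}$ and using the formula of Lemma \ref{CapGoverConnection} yields, after the homogeneous trivialisation, precisely the exterior derivative. This is the standard computation for homogeneous $G$-spaces whose fibre representation extends to $G$. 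As a concrete cross-check, for the standard representation $\mathbb{E}=\mathbb{R}^{n+1}$ one can verify the isomorphism directly from formula (\ref{TracFormula}): realise $\mathbb{H}^n$ as the hyperboloid in $\mathbb{R}^{n+1}$, take the position vector $x$ as the canonical scale spanning the normal direction so that a section $Y+sx$ splits as $(Y,s)$, and compute the flat derivative $D_X(Y+sx)$ using the second fundamental form identity $D_XY=\nabla^g_XY+g(X,Y)x$ to recover exactly the right-hand side of (\ref{TracFormula}). For general $\mathbb{E}$ one may invoke either the homogeneous-bundle argument above, or pass by naturality to tensor products, duals, and sub-quotients of the standard case.
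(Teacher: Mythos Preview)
Your proof is correct and ultimately constructs the same isomorphism as the paper, but the route differs. The paper never invokes the $SO_o(n,1)$-reduction: it works directly with the $P$-bundle $\widetilde{G}\to U$ and writes down the standard trivialisation $\Phi:(gP,v)\mapsto[(g,g^{-1}v)]_P$ of $\widetilde{G}\times_P\mathbb{E}$, which exists simply because $\mathbb{E}$ carries a full $G$-action; checking $\Phi(\gamma gP,\gamma v)=[(\gamma g,g^{-1}v)]_P$ then shows the map descends modulo $\Gamma$. You instead reduce $\widetilde{G}$ to the $SO(n)$-bundle $SO_o(n,1)$ and trivialise $SO_o(n,1)\times_{SO(n)}\mathbb{F}$ via $[g,v]\mapsto(g\cdot x_0,\mu(g)v)$; if one unwinds your composite it coincides with the paper's $\Phi^{-1}$. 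The paper's argument is shorter and uses only the $G$-module structure, whereas your detour through the reduction makes explicit the holonomy-reduction picture emphasised elsewhere in the paper and would adapt to $SO(n,1)$-modules that do not extend to $G$. Your concrete hyperboloid verification for $\mathbb{E}=\mathbb{R}^{n+1}$ via the second fundamental form is a useful sanity check that the paper does not include; on the connection statement both you and the paper appeal to the same standard fact that the Maurer--Cartan-induced connection corresponds to $d$ under the trivialisation.
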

\begin{remk}
Even if the representation $\rho$ used to define the tractor bundle is irreducible, the representation $\mu$ defining the \textup{(}hyperbolic\textup{)} canonical flat bundle will, typically, be reducible.
\end{remk}

\begin{proof} 
We need to first show that, over $M$, there is an isomorphism of vector bundles 
\begin{equation}\label{vis}
\Gamma \backslash (\mathbb{H}^{n} \times \mathbb{E}) \stackrel{\simeq}{\longrightarrow} \mathcal{G}_{\Gamma} \times_{\rho} \mathbb{E},
\end{equation} 
where 
$\gamma \in \Gamma$ acts on  $\mathbb{H}^{n} \times \mathbb{E}$ by,
\[
\gamma (x,y) = (\gamma \cdot x, \gamma v),
\]
and we have used  the embedding $\Gamma \hookrightarrow SL(n+1,\mathbb{R})$ 
to define the action of $\gamma$ on $v \in \mathbb{E}$.
Furthermore, we will show that this is compatible with the connections, as stated.

To do this, we use that there is a vector bundle isomorphism
$\Phi: (G/P) \times \mathbb{E} \rightarrow G \times_{\rho} \mathbb{E}$
given by,
\[ 
\Phi:  (gP, v) \mapsto [(g,g^{-1}v)]_{P}.
\]
It is easily checked that this map is well defined. The inverse map is,
\[
\Phi^{-1}: [(g,v)]_{P} \mapsto (gP, gv).
\]
Next we use the fact that $\mathbb{H}^n$ is identified with an open subset of
$G/P$, so that points $(x,v) \in \mathbb{H}^n \times \mathbb{E}$ can
be written $(gP,v)$. The required isomorphism is constructed by first
forming the mapping $\mathbb{H}^n \times \mathbb{E} \to G
\times_{\rho} \mathbb{E}$ given by
\[
(x,v) = (gP,v) \mapsto \Phi (gP,v),
\]
and then mapping the image to the quotient $\mathcal{G}_\Gamma
\times_\rho \mathbb{E}$ by the projection from $G \times_{\rho}
\mathbb{E}$ to $\mathcal{G}_\Gamma \times_\rho \mathbb{E} = (\Gamma
\backslash G) \times_{\rho} \mathbb{E}$.  It remains 
to show that 
this mapping $\mathbb{H}^n\times \mathbb{E}\to \mathcal{G}_\Gamma \times_\rho \mathbb{E}$ descends to a well-defined
map on $\Gamma \backslash (\mathbb{H}^n \times \mathbb{E})\to \mathcal{G}_\Gamma \times_\rho \mathbb{E}$.  But this
follows at once  from the fact that,
\[
\Phi (\gamma g P, \gamma v) = [ (\gamma g, g^{-1} v)]_P.
\]
So we have the bundle map (\ref{vis}), and by the construction this is an isomorphism. 

Via the isomorphism $\Phi$,  the trivial connection 
on the bundle $(G/P)\times \mathbb{E}$ over $\mathbb{H}^n$ is mapped to 
that tractor connection $\nabla^{\mathcal{T}_{\mathbb{E}}}$ on $G\times_\rho \mathbb{E}$. 
 This implies
that the vector bundle isomorphism $\Gamma \backslash (\mathbb{H}^{n}
\times \mathbb{E}) \stackrel{\simeq}{\to} \mathcal{T}_{\mathbb{E}}$ sends $\nabla^{dev}$ to
$\nabla^{\mathcal{T}_{\mathbb{E}}}$.
\end{proof}

\subsection{The Tractor connection and group cohomology}\label{VoganZuckerman}

Given an $SO(n,1)$-module $\mathbb{F}$, we can use the developing map $dev$ to construct the group cohomology, $H^{\bullet}(\Gamma, \mathbb{F})$ (see \cite{Brown}).
Since $M$ is an Eilenberg-Maclane space $K (\Gamma,1)$ the group cohomology is isomorphic to the simplicial cohomology of $M$ with local coefficients in $\mathbb{F}$. 

The connection $\nabla^{dev}$ is flat, so a differential complex on $M$ arises by twisting the de Rham complex with $(\nabla^{dev}, F)$. It is a standard fact (see, for example, Section $3$ of \cite{Millson3}) that the cohomology of this twisted de Rham complex is isomorphic to the simplicial cohomology of $M$ with local coefficients in $\mathbb{F}$, and hence is isomorphic to $H^{\bullet}(\Gamma, \mathbb{F})$.

Next, using that the isomorphism in Theorem \ref{MainIsom} is also an
isomorphism of connections (and with $E$ and $F$ related as there), we
have at once the following important consequence. Here, as in Theorem
\ref{MainIsom}, $\mathbb{F}$ is a restriction to $\Gamma\subset
SO(n,1)$ of the representation $\mathbb{E}$ of $SL(n+1)$, inducing
$\cT_E$.
\begin{prop} \label{preview}
 The cohomology of the twisted de Rham complex,
\begin{equation}\label{twisteddeRham0}
0 \longrightarrow \Gamma( \mathcal{T}_{\mathbb{E}} ) \overset{ \nabla^{\mathcal{T}_{\mathbb{E}}} }{\longrightarrow} \Gamma (TM^{*} \otimes \mathcal{T}_{\mathbb{E}}) \overset{ d^{\nabla^{\mathcal{T}_{\mathbb{E}}}} }{\longrightarrow} \ldots \overset{ d^{\nabla^{\mathcal{T}_{\mathbb{E}}}}}{\longrightarrow} \Gamma (\Lambda^{n} TM^{*} \otimes \mathcal{T}_{\mathbb{E}} ) \longrightarrow 0,
\end{equation}
is isomorphic to the group cohomology with coefficients $H^{\bullet}(\Gamma, \mathbb{F})$.
\end{prop}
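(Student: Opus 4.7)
The plan is to chain together three isomorphisms, the substantive one being Theorem \ref{MainIsom}. That theorem provides a canonical bundle isomorphism $\phi: F \to \mathcal{T}_{\mathbb{E}}$ intertwining the flat connections $\nabla^{dev}$ and $\nabla^{\mathcal{T}_{\mathbb{E}}}$. This pointwise isomorphism extends tensorially, as $\mathrm{id}_{\Lambda^{\bullet} TM^{*}} \otimes \phi$, to a map of graded spaces $\Gamma(\Lambda^{\bullet} TM^{*} \otimes F) \to \Gamma(\Lambda^{\bullet} TM^{*} \otimes \mathcal{T}_{\mathbb{E}})$. The compatibility $\phi \circ \nabla^{dev} = \nabla^{\mathcal{T}_{\mathbb{E}}} \circ \phi$ immediately forces this extension to commute with the twisted exterior derivatives $d^{\nabla^{dev}}$ and $d^{\nabla^{\mathcal{T}_{\mathbb{E}}}}$ (via the Leibniz-type formula that defines the latter from the given connections), and so $\phi$ induces an isomorphism of the two twisted de Rham complexes, hence of their cohomologies.

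Next I would invoke the twisted de Rham theorem, as recalled at the beginning of Section \ref{VoganZuckerman}: for any flat bundle over $M$, the cohomology of the associated twisted de Rham complex is canonically isomorphic to the simplicial cohomology of $M$ with local coefficients in the typical fibre, where the local system is determined by the holonomy representation. Applied to $(F,\nabla^{dev})$, whose holonomy is by construction the developing representation of $\Gamma \cong \pi_{1}(M)$ on $\mathbb{F}$, this identifies the cohomology of $(\Omega^{\bullet}(M,F), d^{\nabla^{dev}})$ with $H^{\bullet}_{\mathrm{simp}}(M, \underline{\mathbb{F}})$, where $\underline{\mathbb{F}}$ denotes the corresponding local system.

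Finally, since $\mathbb{H}^{n}$ is contractible and $\Gamma$ acts freely and properly discontinuously by isometries, $M = \Gamma \backslash \mathbb{H}^{n}$ is an Eilenberg--MacLane space $K(\Gamma,1)$. The standard identification (see \cite{Brown}) then yields $H^{\bullet}_{\mathrm{simp}}(M, \underline{\mathbb{F}}) \cong H^{\bullet}(\Gamma, \mathbb{F})$, with $\mathbb{F}$ regarded as a $\Gamma$-module via $dev$. Composing these three isomorphisms gives the proposition.

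There is no real obstacle here; the result is essentially an assembly of Theorem \ref{MainIsom} with two classical facts already cited in the text, and the only step worth making explicit is the compatibility of $\phi$ with the twisted differentials, which is routine from the preservation of flat connections established in Theorem \ref{MainIsom}. All of the genuine work is buried inside that theorem, and the rest is formal.
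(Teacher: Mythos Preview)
Your proposal is correct and matches the paper's own argument essentially verbatim: the paper also chains together the Eilenberg--MacLane identification $H^{\bullet}(\Gamma,\mathbb{F})\cong H^{\bullet}_{\mathrm{simp}}(M,\underline{\mathbb{F}})$, the twisted de Rham theorem for $(F,\nabla^{dev})$, and the isomorphism of flat bundles from Theorem~\ref{MainIsom}. If anything, you are slightly more explicit than the paper in spelling out why the connection-preserving bundle map induces an isomorphism of twisted de Rham complexes.
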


\begin{exam}
If $\mathbb{E}=S^{2} \mathbb{R}^{n+1}$, then, 
\begin{equation}\label{twisteddeRham1}
H^{\bullet}(M, \mathcal{T}_{S^{2} \mathbb{R}^{n+1}}) \cong H^{\bullet} (\Gamma, S^{2}_{0} \mathbb{R}^{n+1}) \oplus H^{\bullet}(\Gamma, \mathbb{R}^{n+1}),
\end{equation}
where $S^{2}_{0} \mathbb{R}^{n+1}$ denotes the trace-free part (using the $(n,1)$ metric $h$) of $S^{2} \mathbb{R}^{n+1}$, and $H^{\bullet}(M, \mathcal{T}_{S^{2} \mathbb{R}^{n+1}})$ denotes the cohomology of the twisted de Rham complex (\ref{twisteddeRham0}). 
\end{exam}

The relationship to group cohomology means that we can use the vanishing results of \cite{VoganZuckerman}. To do so we introduce some notation. 
Fix a Cartan subalgebra $\mathfrak{h}$ of the complexification of $\mathfrak{so}(n,1)$. Let $\varepsilon_{i}$ be the $i$th usual Cartesian coordinate functional on $\mathfrak{h}$, so that the positive roots are the sums and differences of the $\varepsilon_{i}$ in case $n+1$ is even, and $\varepsilon_{i}$ are the short positive roots if $n+1$ is odd.
If $m$ is the rank of $SO(n,1)$, and $\lambda=(\lambda_{1}, \lambda_{2}, \ldots, \lambda_{m})$ is a weight expressed in the basis $\{ \varepsilon_{i} \}_{i=1}^{m}$, define $i(\lambda)$ to be the number of nonzero entries in the vector $\lambda$.
\begin{thm}[\cite{VoganZuckerman, Millson3}]\label{Vanishing}
If $\mathbb{F}$ is an irreducible representation of $SO(n,1)$ with highest weight $\lambda$, then,
\begin{enumerate}
\item If $n=2m-1$ and all entries of $\lambda$ are nonzero, then $H^{k}(\Gamma, \mathbb{F})=0$ for all $k$.
\item For all other irreps $\mathbb{F}$,
\[
k \notin \{ i (\lambda), i (\lambda) +1, \ldots, n - i (\lambda) \} \implies H^{k} (\Gamma, \mathbb{F}) =0.
\]
\end{enumerate}
\end{thm}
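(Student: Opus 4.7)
The plan is to invoke Matsushima's formula together with the Vogan--Zuckerman classification of irreducible unitary representations of $SO(n,1)$ with nontrivial $(\mathfrak{g},K)$-cohomology. Since $M = \Gamma \backslash \mathbb{H}^n$ is compact, Matsushima's formula gives
\[
H^{k}(\Gamma, \mathbb{F}) \cong \bigoplus_{\pi \in \widehat{G}} m(\pi,\Gamma) \, H^{k}(\mathfrak{g}, K; \pi \otimes \mathbb{F}),
\]
where $G = SO_{o}(n,1)$, $K$ is a maximal compact subgroup, $m(\pi,\Gamma)$ is the multiplicity with which $\pi$ occurs in $L^{2}(\Gamma \backslash G)$, and the sum runs over the unitary dual. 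Thus the vanishing of $H^{k}(\Gamma,\mathbb{F})$ in some degree $k$ will follow from the purely representation-theoretic assertion that $H^{k}(\mathfrak{g}, K; \pi \otimes \mathbb{F})=0$ for every irreducible unitary $\pi$ whenever the highest weight $\lambda$ of $\mathbb{F}$ and the degree $k$ are as in the statement.

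First I would invoke the Vogan--Zuckerman theorem, which says that every irreducible unitary $\pi$ with nonvanishing $(\mathfrak{g},K)$-cohomology with coefficients in a finite-dimensional $\mathbb{F}$ is cohomologically induced from a one-dimensional character of a Levi subgroup associated to a $\theta$-stable parabolic subalgebra $\mathfrak{q} = \mathfrak{l} \oplus \mathfrak{u}$ of $\mathfrak{g}_{\mathbb{C}}$; that is, $\pi = A_{\mathfrak{q}}(\mu)$ for some $\mu$ compatible with the highest weight of $\mathbb{F}$. Moreover, the degrees in which $H^{k}(\mathfrak{g}, K; A_{\mathfrak{q}}(\mu) \otimes \mathbb{F})$ is nonzero form the interval
\[
R_{\mathfrak{q}} \leq k \leq R_{\mathfrak{q}} + \dim(\mathfrak{l} \cap \mathfrak{p}_{\mathbb{C}}) - \operatorname{rk}(\mathfrak{l} \cap \mathfrak{k}_{\mathbb{C}}) + \operatorname{rk}(\mathfrak{k}_{\mathbb{C}}),
\]
centred symmetrically about $n/2$, where $R_{\mathfrak{q}} = \dim(\mathfrak{u} \cap \mathfrak{p}_{\mathbb{C}})$.

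Next I would classify the $\theta$-stable parabolics of $\mathfrak{so}(n,1)_{\mathbb{C}}$. These correspond to subsets of the simple root system compatible with the Cartan involution; concretely each such $\mathfrak{q}$ is determined by a choice of how many of the coordinate directions $\varepsilon_{i}$ are placed into $\mathfrak{u}$. For $\mathfrak{q}$ to be compatible with $\lambda$ (in the sense required by Vogan--Zuckerman), the coordinates along which $\lambda$ is nonzero must sit inside $\mathfrak{l}$, forcing $\dim(\mathfrak{u} \cap \mathfrak{p}_{\mathbb{C}}) \geq i(\lambda)$ with equality achievable. Translating into degree ranges then produces the bound $i(\lambda) \leq k \leq n - i(\lambda)$ of part (2). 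For part (1), in the split case $n = 2m-1$ with all $\lambda_{i}$ nonzero, no nontrivial $\theta$-stable parabolic is compatible with $\lambda$, so the only candidate $\pi$ is the trivial representation, whose $(\mathfrak{g},K)$-cohomology with coefficients in the nontrivial $\mathbb{F}$ vanishes in every degree.

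The main obstacle is the combinatorial bookkeeping in the final step: carefully enumerating the $\theta$-stable parabolics of $\mathfrak{so}(n,1)_{\mathbb{C}}$ in both the $B_{m}$ and $D_{m}$ cases, verifying the compatibility constraints between $\mathfrak{q}$ and the highest weight $\lambda$, and checking that the resulting $R_{\mathfrak{q}}$ realises the advertised endpoints of the cohomology interval while no smaller or larger value of $R_{\mathfrak{q}}$ is attainable. Once this case analysis is in place, both statements follow immediately from Matsushima's formula and the Vogan--Zuckerman degree range.
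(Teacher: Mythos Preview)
The paper does not prove this theorem at all: it is stated with attribution to Vogan--Zuckerman and Millson, and the remark immediately following it says explicitly that the statement is taken from \cite{Millson3}. So there is no ``paper's own proof'' to compare against; the authors are simply quoting a known result.

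Your sketch follows exactly the standard route used in the cited sources: Matsushima's formula reduces the question to $(\mathfrak{g},K)$-cohomology of unitary representations, and the Vogan--Zuckerman classification of $A_{\mathfrak q}(\mu)$ modules together with the explicit list of $\theta$-stable parabolics for $\mathfrak{so}(n,1)$ gives the degree range. That is the correct architecture, and it is essentially what one finds in \cite{Millson3}.

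One point of caution in your combinatorial step: the compatibility constraint between $\lambda$ and $\mathfrak q$ is stated slightly backwards. The relevant condition is that the element $X$ defining $\mathfrak q$ must lie in the closed chamber determined by $\lambda$, which forces the \emph{nonzero} coordinates of $\lambda$ to correspond to roots lying in $\mathfrak u$, not in $\mathfrak l$; this is what pushes $R_{\mathfrak q} = \dim(\mathfrak u \cap \mathfrak p_{\mathbb C})$ up to at least $i(\lambda)$. As written, placing the nonzero coordinates in $\mathfrak l$ would make $\mathfrak u$ smaller, not larger. You correctly flag this bookkeeping as the main obstacle, and once the direction is straightened out the enumeration in the $B_m$ and $D_m$ cases goes through as in Millson's paper.
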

\begin{remk}
This statement of what the Vogan-Zuckerman theorem implies for hyperbolic manifolds is taken from \cite{Millson3}.
\end{remk}
An important instance of this vanishing theorem that we will use later is,
\begin{equation}\label{AllVanish}
H^0(\Gamma, \mathbb{F})=0,
\end{equation}
for any nontrivial representation $\mathbb{F}$.

As a consistency check, and to provide an alternative geometric perspective, we include a short tractor calculus proof of the case where $\mathbb{F}=\mathbb{R}^{n+1}$.
\begin{lemma}\label{NoLine}
On a compact hyperbolic manifold $M = \Gamma \backslash \mathbb{H}^{n}$, 
\[
H^{0}(\Gamma, \mathbb{R}^{n+1})=0.
\]
\end{lemma}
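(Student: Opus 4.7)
The plan is to translate $H^0(\Gamma, \mathbb{R}^{n+1})$ into a statement about parallel sections of the standard tractor bundle, and then use formula (\ref{TracFormula}) to reduce the parallelism condition to a simple overdetermined PDE on a single function, which one kills by an integration-by-parts argument on the compact manifold $M$.

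More concretely, by Proposition \ref{preview} the group cohomology $H^0(\Gamma, \mathbb{R}^{n+1})$ is isomorphic to the kernel of $\nabla^{\mathcal{T}}$ acting on $\Gamma(\mathcal{T})$. Using the splitting $\mathcal{T} \cong TM \oplus \mathbb{R}$ and (\ref{TracFormula}), a parallel section $(Y,s)$ satisfies the coupled equations
\[
\nabla^{g}_{X} Y = -s\, X, \qquad X(s) = -g(X,Y),
\]
for every $X \in \Gamma(TM)$. The second equation says $Y^{\flat} = -ds$, i.e.\ $Y = -\operatorname{grad}(s)$. Substituting into the first yields the single overdetermined PDE
\[
\operatorname{Hess}(s) = s\, g,
\]
so that finding a parallel tractor amounts to solving this equation.

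Taking the metric trace gives $\Delta s = n s$, where $\Delta = g^{ab}\nabla_a \nabla_b$ is the (geometer's sign) Laplace-Beltrami operator. Since $M$ is compact and oriented, integration by parts yields
\[
n \int_M s^2\, dV_g \;=\; \int_M s\, \Delta s\, dV_g \;=\; -\int_M |\nabla s|^2\, dV_g \;\leq\; 0.
\]
Because $n \geq 2$, this forces $s \equiv 0$, and then $Y = -\operatorname{grad}(s) = 0$. Hence the only parallel tractor is the zero section, proving the lemma.

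There is no real obstacle here; the key conceptual step is recognising that the tractor parallelism condition, via the splitting formula, is an almost-Einstein/Killing type equation $\operatorname{Hess}(s) = s g$, after which the maximum principle or the above integration-by-parts immediately gives vanishing. One could equivalently argue by noting that at a maximum of $s$ one has $\operatorname{Hess}(s) \leq 0$, hence $s \leq 0$ there, and symmetrically $s \geq 0$ at a minimum, so $s \equiv 0$.
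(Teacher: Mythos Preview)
Your proof is correct and follows essentially the same route as the paper's: reduce to parallel sections of $\mathcal{T}$, use the splitting formula (\ref{TracFormula}) to obtain $\operatorname{Hess}(s)=s\,g$, and conclude by the nonnegativity of the Laplacian on a closed manifold. The only cosmetic differences are that the paper invokes Theorem~\ref{MainIsom} rather than Proposition~\ref{preview} and simply cites the spectral fact, whereas you spell out the integration-by-parts (and a maximum-principle variant); these are equivalent.
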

\begin{proof}
From theorem \ref{MainIsom} it is sufficient to show that there are no (nontrivial) globally parallel sections of $\mathcal{T}$. 
Suppose, with a view to contradiction, that 
\[
\sigma = \left(
\begin{array}{c}
Y \\
s
\end{array}
\right) \; 
\in \Gamma(\mathcal{T}),
\]
is a nontrivial section satisfying $\nabla^{\mathcal{T}} \sigma=0$ everywhere on $M$. Then, for each $X\in \Gamma(TM)$, formula (\ref{TracFormula}) gives two equations,
\begin{align}\nonumber
\nabla^{g}_{X} Y + s X &= 0 \\ \nonumber
\nabla_{X}^{g} s + g(X,Y) &=0. \\ \nonumber 
\end{align}
The bottom equation means that $ds=-Y^{\flat}$, and so $s$ must be nontrivial, since $\sigma$ is assumed to be nontrivial. 
Substituting $ds=-Y^{\flat}$ into the first equations yields, in indices, 
\[
-\nabla_{a} \nabla^{b} s + s \tensor{\delta}{_a^b}=0,
\] 
and contraction gives that $\Delta^{g} s + n s =0$. This is a  contradiction because $\Delta^{g}$ has nonnegative eigenvalues.
\end{proof}

\begin{remk}\label{thomasD}
For readers who are familiar with tractor calculus, a similar proof
can be given that $H^{0}(\Gamma, S^{k}_{0} \mathbb{R}^{n+1}) = 0$,
using the Thomas $D$ operator and the canonical tractor $X$
\textup{(}\cite{BailEastGov}, Section $3.5$\textup{)}. Suppose a
non-zero parallel, trace-free symmetric tractor $\sigma_{A_{1} A_{2} \cdots
  A_{k}} \in \Gamma(S^{k}_{0} \mathbb{T}) $ exists. Then $s := \left(
\sigma_{A_{1} A_{2} \cdots A_{k}} X^{A_{1}} X^{A_{2}} \cdots X^{A_{k}}
\right)$ is not zero, where we have used abstract indices. Using the
pseudo-Leibniz rule which the Thomas $D$ operator satisfies
\textup{(}eg. in the proof of Proposition $3.6$ of
\cite{BailEastGov}\textup{)}, it is easy to show,
\[
D^{A}D_{A} s = 0.
\]
On the other hand,
\[
D^{A} D_{A} s = -\Delta^{g} s  -  k (k+n-1) s,
\]
so that $\Delta^{g}$ has a negative eigenvalue $-k(k+n-1)$, which is
impossible. In fact, a variant of this tractor calculus proof shows
that the result holds much more generally: for example no parallel sections of
$S^{k}_{0} \mathcal{T}$ exist on any closed
manifold of negative constant scalar curvature.
\end{remk}

\section{Bernstein-Gelfand-Gelfand Complexes}%%%%%%%%%%%%%%%%%%%%%%%%%%%%%%%%%%%%%%%%%%%%%%%%%%%%%%%%%
\label{BGGsec}

An important motivation for constructing the isomorphism between the
flat bundle $(E, \nabla^{dev})$ and $(\mathcal{T}_{\mathbb{E}},
\nabla^{\mathcal{T}_{\mathbb{E}}})$ is that the latter is intimately
related to the construction of projectively invariant differential
operators on $M$. Moreover, we can use the tractor connection to
construct \emph{complexes} of differential operators, generalising the
Bernstein-Gelfand-Gelfand (BGG) complexes of representation theory.

We will now give a brief outline of the construction of these
differential BGG complexes, using the tractor connection.  Unless
stated otherwise, proofs of all statements in the following two
sections can be found in the foundational articles \cite{CSS}
\cite{CaldDiemer}, (also \cite{CS2}, section $2$ contains a useful
review of these constructions).

\subsection{The $|1|$-grading and Lie algebra homology}\label{BG}
It will be necessary to utilise some features of the group $G$ and its Lie algebra. 

First of all, the Lie algebra $\mathfrak{g}$ has a decomposition,
\[
\mathfrak{g} \cong \mathfrak{g}_{-1} \oplus \mathfrak{g}_{0} \oplus \mathfrak{g}_{1},
\]
%where,
%\[
%\mathfrak{g}_{-1} = \left\{ 
%\left(
%\begin{array}{c|ccc}
%0 & 0 & \cdots & 0 \\
%\hline
%* & \multicolumn{3}{c}{\multirow{4}{*}{\Huge $0$}} \\
%\vdots & \\
%* & 
%\end{array}
%\right)
%\right\}, \;
%\mathfrak{g}_{0}=\left\{ 
%\left(
%\begin{array}{c|ccc}
%* & 0 & \cdots & 0 \\
%\hline
%0 & \multicolumn{3}{c}{\multirow{4}{*}{\Huge $*$}} \\
%\vdots & \\
%0 & 
%\end{array}
%\right)
%\right\}, \;
%\mathfrak{g}_{1} = \left\{ 
%\left(
%\begin{array}{c|ccc}
%0 & * & \cdots & * \\
%\hline
%0 & \multicolumn{3}{c}{\multirow{4}{*}{\Huge $0$}} \\
%\vdots & \\
%0 & 
%\end{array}
%\right)
%\right\},
%\]
where $\mathfrak{p}:=\mathfrak{g}_{0} \oplus \mathfrak{g}_{1}$ is the Lie algebra of the parabolic subgroup $P$, and $\mathfrak{g}_{1}$ is a nilpotent ideal of $\mathfrak{p}$.
The Lie algebra direct summands have the property that $[\mathfrak{g}_{i}, \mathfrak{g}_{j}] \subset \mathfrak{g}_{i+j}$, so that $\mathfrak{g}$ is a `$|1|$-graded Lie algebra'. 
For more information on $|k|$-graded Lie algebras, consult section $3.1.2$ of \cite{CS}.

Fix a finite-dimensional $SL(n+1,\mathbb{R})$ representation $\rho: SL(n+1,\mathbb{R}) \rightarrow GL(\mathbb{E})$. Define the vector spaces $C^{k}(\mathfrak{g}_{1},
\mathbb{E}) := \Lambda^{k} \mathfrak{g}_{1} \otimes \mathbb{E}$; these have a natural $P$-module structure, and we will use the same notation for the corresponding representation 
$\rho: P \rightarrow GL(\Lambda^{k} \frak{g}_{1} \otimes \mathbb{E})$. 

{}From this $P$-module structure it follows that we can form
associated vector bundles, $\mathcal{G}_{\Gamma} \times_{\rho} C^{k}
(\mathfrak{g}_{1}, \mathbb{E})$.  As mentioned in section \ref{Ct}, it
is easily verified, via the adjoint representation, that the
associated vector bundle $\mathcal{G}_{\Gamma} \times_{Ad}
\mathfrak{g} / \mathfrak{p}$ is isomorphic to the tangent bundle of
$M$, see e.g.\ \cite{CSS}. Since $(\mathfrak{g}/ \mathfrak{p})^{*}
\cong \mathfrak{g}_{1}$ as $P$-modules, it can then also be shown
that,
\[
\mathcal{G}_{\Gamma} \times_{\rho} C^{k} (\mathfrak{g}_{1}, \mathbb{E}) \cong \Lambda^{k} TM^{*} \otimes \mathcal{T}_{\mathbb{E}}.
\]

Now we define the `Kostant codifferential' $\partial^{*}: C^{k}(\mathfrak{g}_{1}, \mathbb{E}) \rightarrow C^{k-1}(\mathfrak{g}_{1}, \mathbb{E})$ by,
$$
\partial^{*} (Z_{1} \wedge \ldots Z_{k} \otimes v) = \sum_{i=0}^{k} (-1)^{i+1} Z_{1} \wedge \widehat{Z}_{i} \ldots \wedge Z_{n} \otimes Z_{i} \cdot v ,
$$ where $Z_{i} \in \mathfrak{g}_{1}$ and $v \in \mathbb{E}$. We get
that $(\partial^{*})^{2}=0$, and also that the homology vector spaces
given by this differential are naturally $P$-modules, which will be
denoted $H_{k}(\mathfrak{g}_{1}, \mathbb{E})$. Again, the $P$-module
action will still be denoted $\rho$, since it is determined by the
original representation $\rho$.

The Kostant codifferential is $P$-equivariant, and so induces a
canonical bundle map (which will be denoted with the same symbol
$\partial^{*}$) between the associated bundles,
\[
\partial^{*}: \Lambda^{k} TM^{*} \otimes \mathcal{T}_{\mathbb{E}} \rightarrow \Lambda^{k-1} TM^{*} \otimes \mathcal{T}_{\mathbb{E}}.
\]
The quotient bundles that arise as the homology of $\partial^{*}$ at twisted $k$-forms 
will be denoted $H_{k}(E)$. By construction, we have that $H_{k}(E) \cong \mathcal{G}_{\Gamma} \times_{\rho} H_{k} (\mathfrak{g}_{1}, \mathbb{E})$. 

There is a natural bundle map, 
\begin{equation}\label{proj}
\pi: Ker( \partial^{*} ) \rightarrow H_{k}(E),
\end{equation}
and the same symbol will be used for the corresponding  map on sections. 
\begin{exam}\label{dualex}
If $\mathbb{E}=(\mathbb{R}^{n+1})^{*}$ is the dual of the $G$-defining representation  then $\mathcal{T}_{\mathbb{E}}=\mathcal{T}^{*}$ is the dual of the standard 
tractor bundle, and a simple section in $\Lambda^{k} TM^{*} \otimes \mathcal{T}^{*}$ can be written,
\[
\left(
\begin{array}{c} 
\omega s \\
\omega \otimes \eta \\
\end{array}
\right),
\]
for $\omega \in \Gamma( \Lambda^{k}  TM^{*} ))$, $\eta \in \Gamma (TM^{*})$ and $s \in C^{\infty}(M)$. The Kostant codifferential $\partial^{*}$, is given by,
\[
\partial^{*} 
\left(
\begin{array}{c}
 \omega s \\
\omega \otimes \eta \\
\end{array}
\right) = 
\left(
\begin{array}{c} 
0 \\
\omega s \\
\end{array}
\right).
\]
Clearly, $(\partial^{*})^{2}=0$, and the first three homology bundles are,
\[
H_{0}((\mathbb{R}^{n+1})^{*})=\mathbb{R}, \; H_{1}((\mathbb{R}^{n+1})^{*})=S^{2} TM^{*}, \; and \; H_{2}((\mathbb{R}^{n+1})^{*}) = \Lambda^{2} TM^{*} \odot TM^{*},
\]
where (the `Cartan part')  $\Lambda^{2} TM^{*} \odot TM^{*}$ is 
$(\Lambda^{2}TM^{*} \otimes TM^{*}) / \Lambda^{3}TM^{*}$. 
\end{exam}

\subsection{The BGG operators}
\label{BGGSec}

For each irreducible $SL(n+1)$ representation $\mathbb{E}$, the tractor
connection $\nabla^{\mathcal{T}_{\mathbb{E}}}: \Gamma ( \mathcal{T}_{\mathbb{E}}) \rightarrow \Gamma
(TM^{*} \otimes \mathcal{T}_{\mathbb{E}})$ is flat, so there is a corresponding
twisted de Rham complex,
\begin{equation}\label{TwisteddeRham}
0 \longrightarrow \Gamma(\mathcal{T}_{\mathbb{E}})
\overset{\nabla^{\mathcal{T}_{\mathbb{E}}}}{\longrightarrow} \Gamma(TM^{*} \otimes
\mathcal{T}_{\mathbb{E}}) \overset{d^{\nabla^{\mathcal{T}_{\mathbb{E}}}}}{\longrightarrow} \ldots
\overset{ d^{ \nabla^{\mathcal{T}_{\mathbb{E}}}}}{ \longrightarrow } \Gamma(\Lambda^{n}
TM^{*} \otimes \mathcal{T}_{\mathbb{E}}) \longrightarrow 0,
\end{equation}
as used in Proposition \ref{preview}. 
The groundbreaking works \cite{CSS}, \cite{CaldDiemer}, (building on
\cite{Baston1}, \cite{Baston2}), give a construction of a
corresponding differential complex involving lower rank bundles as
follows.

There is, for each $k\in \{0,1,\cdots ,n\}$, a differential  {\em BGG splitting
operator} $L_k: H_{k}(E) \rightarrow \Lambda^{k}TM^{*} \otimes
\mathcal{T}_{\mathbb{E}}$, uniquely characterised by the following properties, for
$\sigma \in H_{k}(E)$,
\begin{enumerate}
\item $\partial^{*} L_k \sigma =0$
\item $\partial^{*} d^{\nabla^{\mathcal{T}_{\mathbb{E}}}} L_k \sigma =0$
\item $\pi L_k \sigma = \sigma$, where $\pi$ is the projection (\ref{proj}) from $\Lambda^{k} TM^{*} \otimes \mathcal{T}_{\mathbb{E}}$ to $H_{k} (E)$.
\end{enumerate}
Next, using the operators $L_k$, it is evidently possible to define differential
operators $D_{k+1} := \pi \circ d^{\nabla^{\mathcal{T}_{\mathbb{E}}}} \circ L_k: H_{k}(E)
\rightarrow H_{k+1}(E)$. The properties of the splitting operators together  with  the fact that (\ref{TwisteddeRham}) is a complex imply that,
\begin{equation}\label{BGG}
0 \longrightarrow \Gamma (H_{0}(E))  \overset{ D_{1} }{\longrightarrow} \Gamma( H_{1} (E)) \overset{D_{2}}{\longrightarrow} \ldots \overset{D_{n}}{\longrightarrow} \Gamma( H_{n}(E) ) \longrightarrow 0,
\end{equation}
is a complex. This is called the \emph{BGG complex} associated to $\mathbb{E}$.

Furthermore, the splitting operator induces a chain homotopy between
the differential complexes (\ref{BGG}) and (\ref{TwisteddeRham}). Thus
we have the following:
\begin{thm}[\v{C}SS, CD]\label{BGGCohom}
The cohomology of the complex (\ref{BGG}) is isomorphic to the twisted de Rham cohomology $H^{\bullet}(M, \mathcal{T}_{\mathbb{E}})$.
\end{thm}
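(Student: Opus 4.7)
The plan is to establish a chain homotopy equivalence between the twisted de Rham complex and the BGG complex, with the splitting operators $L_{k}$ and the projections $\pi$ supplying the maps in each direction. First I would set up algebraic Hodge theory for the Kostant codifferential. Alongside $\partial^{*}$ there is a $P$-equivariant fibrewise operator $\partial\colon \Lambda^{k}\mathfrak{g}_{1}\otimes \mathbb{E}\to \Lambda^{k+1}\mathfrak{g}_{1}\otimes \mathbb{E}$ (the Chevalley--Eilenberg differential dual to $\partial^{*}$). Together they give the Kostant Laplacian $\square := \partial\partial^{*} + \partial^{*}\partial$, and Kostant's classical argument produces a fibrewise $P$-invariant decomposition
\[
\Lambda^{k}TM^{*}\otimes \mathcal{T}_{\mathbb{E}} = \ker(\square) \oplus \mathrm{im}(\partial^{*}) \oplus \mathrm{im}(\partial),
\]
with $\ker(\square) \cong H_{k}(E)$ as associated bundles. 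Let $\Pi_{\mathrm{harm}}$ be the projection onto the harmonic part and $G$ the Green's operator (inverse of $\square$ on $\ker(\square)^{\perp}$); note that $\Pi_{\mathrm{harm}}$ extends $\pi$ from $\ker(\partial^{*})$ to all of $\Omega^{\bullet}(M,\mathcal{T}_{\mathbb{E}})$.

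Next I would construct $L_{k}$ iteratively. Starting from $\sigma\in\Gamma(H_{k}(E))$, take the harmonic lift $\tilde\sigma_{0}$, so properties (1) and (3) hold automatically. Property (2) may fail, but can be arranged by adding correction terms of the form $\partial G \eta$, chosen to kill the $\mathrm{im}(\partial^{*})$-component of $d^{\nabla^{\mathcal{T}_{\mathbb{E}}}} L_{k}\sigma$. The equation for each $\eta$ is solvable because $\square$ is invertible on $\ker(\square)^{\perp}$, and the iteration terminates after finitely many steps thanks to the filtration by homogeneity coming from the $|1|$-grading of $\mathfrak{g}$. The Hodge decomposition gives uniqueness, and then $D_{k+1} = \pi\circ d^{\nabla^{\mathcal{T}_{\mathbb{E}}}}\circ L_{k}$ is well defined by property (2).

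Finally, for the quasi-isomorphism I would build an explicit chain map $\Psi\colon\Omega^{\bullet}(M,\mathcal{T}_{\mathbb{E}})\to \Gamma(H_{\bullet}(E))$ together with a chain homotopy, assembled from $\Pi_{\mathrm{harm}}$, $\partial^{*}$, and $G$. A direct calculation using $(\partial^{*})^{2}=0$, $P$-equivariance, and the flatness of $\nabla^{\mathcal{T}_{\mathbb{E}}}$ shows $\Psi\circ d^{\nabla^{\mathcal{T}_{\mathbb{E}}}} = D\circ \Psi$ and $\Psi\circ L = \mathrm{id}$, while $L\circ\Psi - \mathrm{id} = d^{\nabla^{\mathcal{T}_{\mathbb{E}}}} h + h\, d^{\nabla^{\mathcal{T}_{\mathbb{E}}}}$ for a homotopy $h$ built analogously; this yields both $D^{2}=0$ and the isomorphism on cohomology. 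The main obstacle is the iterative construction of $L_{k}$ and the verification of the chain homotopy identities, both resting on the invertibility of $\square$ off its kernel and the filtration by homogeneity from the $|1|$-grading; once these are in place, the theorem is a standard consequence of homological algebra applied to the bicomplex-like structure on $\Omega^{\bullet}(M,\mathcal{T}_{\mathbb{E}})$ given by $d^{\nabla^{\mathcal{T}_{\mathbb{E}}}}$ and $\partial^{*}$.
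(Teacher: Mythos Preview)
The paper does not actually prove this theorem: it is attributed to \v{C}ap--Slov\`{a}k--Sou\v{c}ek and Calderbank--Diemer, and the text preceding the statement simply records that ``the splitting operator induces a chain homotopy between the differential complexes (\ref{BGG}) and (\ref{TwisteddeRham})'' before invoking the result as Theorem~\ref{BGGCohom}. So there is no in-paper argument to compare against; your outline is a sketch of the proof in the cited references, and the overall strategy (Kostant Hodge theory, iterative construction of $L_{k}$, chain homotopy built from $\partial^{*}$ and the Green's operator) is the Calderbank--Diemer approach and is correct in spirit.

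One technical point deserves correction. You call the Hodge decomposition ``fibrewise $P$-invariant'', but $\partial$ and hence $\square$ are only $\mathfrak{g}_{0}$-equivariant (equivalently, equivariant for the Levi subgroup $G_{0}\subset P$), not $P$-equivariant; only $\partial^{*}$ is $P$-equivariant. Consequently the decomposition $\ker\square\oplus\mathrm{im}\,\partial^{*}\oplus\mathrm{im}\,\partial$ is only $G_{0}$-invariant, and passing to associated bundles requires a choice of Weyl structure (a $G_{0}$-reduction of $\mathcal{G}_{\Gamma}$). In the present setting this is harmless, since the hyperbolic metric supplies a canonical scale and hence a canonical Weyl structure, but the distinction matters if you want to phrase the argument for general parabolic geometries. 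With that caveat your plan is sound.
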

\noindent Now using Theorems (\ref{MainIsom}) and (\ref{BGGCohom}), we have the following result.
\begin{thm}\label{BGGIsomGroup}
Let $\rho: SL(n+1,\mathbb{R}) \rightarrow GL(\mathbb{E})$ and $\mu: SO(n,1) \rightarrow GL(\mathbb{F})$ be finite-dimensional representations with $\rho|_{SO(n,1)} \cong \mu$.

If $M \cong \Gamma \backslash \mathbb{H}^{n}$ is a compact hyperbolic manifold, the cohomology groups $H^{\bullet}(\Gamma,\mathbb{F})$ are isomorphic to the cohomology of the BGG complex associated to $\mathbb{E}$.
\end{thm}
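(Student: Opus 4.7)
The plan is to observe that the theorem follows immediately by composing two isomorphisms that have already been established in the paper, so the role of this proof is essentially just to assemble them.

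First, I would invoke Proposition \ref{preview}, which provides a canonical isomorphism
\[
H^{\bullet}(\Gamma, \mathbb{F}) \;\cong\; H^{\bullet}(M, \mathcal{T}_{\mathbb{E}}),
\]
where the right-hand side denotes the cohomology of the tractor-twisted de Rham complex (\ref{TwisteddeRham}). Recall that Proposition \ref{preview} was itself obtained by combining Theorem \ref{MainIsom} (the canonical flat-bundle isomorphism $(F,\nabla^{dev})\cong(\mathcal{T}_{\mathbb{E}},\nabla^{\mathcal{T}_{\mathbb{E}}})$) with the standard identification of $H^{\bullet}(\Gamma,\mathbb{F})$ with the cohomology of the $\nabla^{dev}$-twisted de Rham complex on $M$; this last identification is valid because $M$ is an Eilenberg--MacLane space $K(\Gamma,1)$. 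The branching hypothesis $\rho|_{SO(n,1)} \cong \mu$ is precisely what is needed to apply Theorem \ref{MainIsom}.

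Second, I would invoke Theorem \ref{BGGCohom}, the chain-homotopy equivalence of \v{C}ap--Slov\'{a}k--Sou\v{c}ek and Calderbank--Diemer, which identifies the cohomology of the tractor-twisted de Rham complex (\ref{TwisteddeRham}) with the cohomology of the BGG complex (\ref{BGG}) associated to $\mathbb{E}$. Composing these two isomorphisms yields the claimed isomorphism between $H^{\bullet}(\Gamma, \mathbb{F})$ and the cohomology of the BGG complex.

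Because each ingredient is already in hand, there is no substantive obstacle to overcome; this theorem is a direct corollary of the machinery developed in the preceding sections. If anything warrants emphasis, it is just that the two hypotheses play distinct roles --- the branching condition $\rho|_{SO(n,1)}\cong\mu$ is what drives Theorem \ref{MainIsom}, while compactness of $M$ is what ensures that the $K(\Gamma,1)$-identification and the finiteness of de Rham cohomology apply cleanly.
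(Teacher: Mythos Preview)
Your proposal is correct and matches the paper's own argument exactly: the paper simply states that the result follows from Theorems \ref{MainIsom} and \ref{BGGCohom}, which is precisely the composition you describe (with Proposition \ref{preview} packaging the first step). One small quibble with your closing remark: the $K(\Gamma,1)$ identification comes from the contractibility of the universal cover $\mathbb{H}^n$, not from compactness of $M$.
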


\subsection{Examples of BGG complexes}\label{BGGex}

\begin{exam}%%%%%%%%%%%%%%%%%%%%%%%%%
For the case where $\mathbb{E}=(\mathbb{R}^{n+1})^{*}$ is the dual of the defining representation of $SL(n+1)$ (cf.\ example \ref{dualex}), the beginning of the corresponding BGG complex begins,
\[
0 \longrightarrow C^{\infty}(M) \overset{D_{0}}{\longrightarrow} \Gamma(S^{2} TM^{*}) \overset{ D_{1} }{\longrightarrow} \Gamma ( \Lambda^{2} TM^{*} \odot TM^{*}) \overset{D_{2}}{\longrightarrow} \cdots.
\]
The operators are defined by, 
\begin{align}\nonumber
(D_{0}s)(X,Y) &= (\nabla_{X}^{g} ds)(Y)- g(X,Y)s, \; s \in C^{\infty}(M) \\ \nonumber
2 (D_{1}u)(X,Y,Z) &=  (\nabla^{g}_{X}u)(Y,Z) - (\nabla^{g}_{Y}u)(X,Z) , \; u \in \Gamma(S^{2} TM^{*}), \nonumber
\end{align}
and there are similar expressions for the higher $D_{k}$, $k \geq 2$.
\end{exam}
In this example, the restriction of $\mathbb{E}$ to $SO(n,1)$ is just the dual of the defining representation of $SO(n,1)$, which we will still write as $(\mathbb{R}^{n+1})^{*}$. 
Using the combination of Theorem \ref{MainIsom} and Theorem \ref{BGGCohom}, the complex in the preceding example yields immediate geometric
interpretations of the group cohomology $H^{\bullet}(\Gamma, (\mathbb{R}^{n+1})^{*})$.

For example, on any manifold with a projective structure a nowhere
zero solution of $D_{0}$ means that $(M,g)$ is projectively
Ricci-flat, meaning there is an affine connection $\nabla\in \pp$ that is is
projectively Ricci-flat. Furthermore any solution of $D_{0}$ is
necessarily non-zero on an open dense subset of $M$
\cite{CapGoverHammerl}, Section 3.2.  Theorem \ref{MainIsom} gives
that $H^{0}(\Gamma, (\mathbb{R}^{n+1})^{*}) \cong H^{0} (M,
\mathcal{T}^{*})$, and so Theorem \ref{BGGCohom} gives an
interpretation of the fact that $H^{0}(\Gamma, (\mathbb{R}^{n+1})^{*})
=0$. Specifically, it means that $D_0$ has no solutions, and so the
Levi-Civita connection on $M$ is not projectively related to a
Ricci-flat (and hence flat) affine connection.

\begin{exam}[This generalises the  previous example]\label{DualBGG}
For the case $\mathbb{E}=S^{k}(\mathbb{R}^{n+1})^*$, 
the corresponding BGG complex begins,
\[
0 \longrightarrow C^{\infty}(M) \overset{D_{0}}{\longrightarrow} \Gamma ( S^{k+1} TM^{*}) \overset{D_{1}}{\longrightarrow} \Gamma( \Lambda^{2} TM^{*} \odot S^{k} TM^{*} ) \overset{D_{2}}{\longrightarrow} \cdots,
\]
If $s \in C^{\infty}(M)$ and $X_{i} \in \Gamma(TM) $ are vector fields, the operator $D_{0}$ is defined by, $D_{0}s :=\mathbf{Sym} (\tilde{D}_{0} s)$, where,
\[
(\tilde{D}_{0}s)(X_{1},X_{2},\ldots,X_{k+1}) :=\nabla^{g}_{X_{1}} \nabla^{g}_{X_{2}} \cdots \nabla^{g}_{X_{k+1}} s - c_k g(X_{1},X_{2}) \nabla^{g}_{X_{3}} \ldots \nabla^{g}_{X_{k+1}} s, 
\]
where $c_k$ is a constant (depending on $k$) and $\mathbf{Sym}$ is the
symmetrisation map from $\otimes^{k+1} TM^{*}$ to $S^{k+1} TM^{*}$.

If $u \in \Gamma(S^{k+1} TM^{*})$, the operator $D_{1}$ is defined by, $D_{1} u:=\mathbf{Proj}(\tilde{D}_{1} u)$, where,
\[
\tilde{D}_{1}u (X_{1},X_{2}, \ldots, X_{k+2}) := (\nabla^{g}_{X_{1}} u)(X_{2}, \ldots, X_{k+2}), 
\] 
and $\mathbf{Proj}$ is the projection map from $TM^{*} \otimes S^{k+1} TM^{*}$ to the
Cartan part $\Lambda^{2} TM^{*} \odot S^{k} TM^{*})$. See
e.g.\ \cite{EastwoodGover} for the formulae arising in this example.
\end{exam}

Another example of the geometric interpretation of group cohomology is the following:
\begin{prop}\label{TracefreeCodazzi}
The existence of a global, trace-free Codazzi tensor in $\Gamma(S^{2} TM^{*})$ implies that $H^{1}(\Gamma, (\mathbb{R}^{n+1})^{*})\neq 0$.
\end{prop}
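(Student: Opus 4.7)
The strategy is to exhibit the trace-free Codazzi tensor $u$ as a nonzero class in $H^1$ of the projective BGG complex associated to $\mathbb{E} = (\mathbb{R}^{n+1})^*$, and then invoke Theorem \ref{BGGIsomGroup} together with the observation that $\mathbb{E}|_{SO(n,1)}$ is again $(\mathbb{R}^{n+1})^*$ as an $SO(n,1)$-module. Recall from Examples \ref{dualex} and \ref{DualBGG} (with $k=1$) that the relevant complex begins
$$
0 \longrightarrow C^{\infty}(M) \xrightarrow{D_{0}} \Gamma(S^{2} TM^{*}) \xrightarrow{D_{1}} \Gamma(\Lambda^{2} TM^{*} \odot TM^{*}) \longrightarrow \cdots,
$$
with $2(D_{1} u)(X,Y,Z) = (\nabla^{g}_{X} u)(Y,Z) - (\nabla^{g}_{Y} u)(X,Z)$. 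So $\ker D_{1}$ is precisely the space of Codazzi tensors in $\Gamma(S^{2} TM^{*})$, and our given $u$ tautologically represents a class $[u] \in H^1$ of the BGG complex.

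The substantive step is to show $[u] \neq 0$, i.e.\ that $u$ does not lie in the image of $D_{0}$. I would argue by contradiction: assume $u = D_{0} s$ for some $s \in C^{\infty}(M)$. Taking the $g$-trace of both sides, and using $(D_{0} s)(X,Y) = (\nabla^{g}_{X} ds)(Y) - g(X,Y) s$, a local orthonormal frame computation gives
$$
\operatorname{tr}_{g}(D_{0} s) \;=\; -\Delta^{g} s - n s,
$$
in the sign convention of Lemma \ref{NoLine}, where $\Delta^{g}$ has non-negative spectrum. The trace-free hypothesis on $u$ then forces $\Delta^{g} s + n s = 0$. Since $n \geq 2$ and $\Delta^{g} \geq 0$ on the compact Riemannian manifold $M$, this equation admits only $s = 0$; hence $u = D_{0} s = 0$, contradicting $u \neq 0$. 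Therefore $[u]$ is a nonzero class in $H^1$ of the BGG complex, and Theorem \ref{BGGIsomGroup} yields $H^{1}(\Gamma,(\mathbb{R}^{n+1})^{*}) \neq 0$.

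The crux of the argument is the trace computation: the trace-free condition on $u$ is exactly what converts the hypothetical equation $u = D_{0} s$ into a scalar eigenvalue equation with no solution on a compact manifold. Without trace-freeness, $u$ could be obtained by adjusting $s$ to absorb its trace, and no obstruction would be available. Beyond correctly tracking the sign conventions, there is no real obstacle, as the Codazzi condition is by design the $D_{1}$-closed condition in this BGG complex, and the identification of BGG cohomology with group cohomology is provided by Theorem \ref{BGGIsomGroup}.
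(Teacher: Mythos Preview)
Your proof is correct and follows essentially the same approach as the paper: identify the Codazzi tensor as a $D_{1}$-cocycle in the BGG complex for $(\mathbb{R}^{n+1})^{*}$, argue by contradiction that if $u = D_{0}s$ then taking the $g$-trace and using the trace-free hypothesis yields $\Delta^{g}s + ns = 0$, which forces $s=0$ and hence $u=0$; then invoke Theorem~\ref{BGGIsomGroup}. The only cosmetic difference is that the paper routes through Theorem~\ref{MainIsom} before Theorem~\ref{BGGIsomGroup}, but the substance is identical.
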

\begin{proof}
First use Theorem \ref{MainIsom} to get that $H^{1}(\Gamma, (\mathbb{R}^{n+1})^{*}) \cong H^{1}(M, \mathcal{T}^{*})$, and then 
use Theorem \ref{BGGIsomGroup} to reduce the problem to finding a nonzero element of the first BGG cohomology. 
Now, use the fact that a Codazzi tensor $u \in \Gamma (S^{2} TM^{*})$ is a solution of the BGG operator $D_{1}: \Gamma( S^{2} TM^{*} ) \rightarrow \Gamma (\Lambda^{2} TM^{*} \odot TM^{*})$ defined in the preceding example. 

Suppose, with a view to contradiction, that there is a function $s$ with $D_{0} s = u$. Then, from the expression for $D_{0}$ in example \ref{DualBGG}, 
we get that, in indices,
\[
\nabla_{a} \nabla_{b} s - g_{ab}s = u_{ab}. 
\]
Contracting the previous equation gives,
\[
-\Delta^{g} s - n s = \tensor{u}{^{c}_{c}},
\]
where we use the Laplacian $\Delta^{g} = - \nabla^{c} \nabla_{c} = \delta d $. But the hypothesis that $u$ is trace-free means that $-n$ is an eigenvalue of $\Delta^{g}$, which is impossible. 
\end{proof}
This proposition gives a tractor interpretation of the proof in \cite{Lafontaine}. In \cite{Lafontaine}, Lafontaine constructs a trace-free Codazzi tensor, and then shows using different methods that this is related to $H^{1}(\Gamma, \mathbb{R}^{n+1})$. Since $\Gamma$ lies in $SO(n,1)$ it follows that  $H^{1}(\Gamma, \mathbb{R}^{n+1})$ is canonically isomorphic to $H^{1}(\Gamma, (\mathbb{R}^{n+1})^{*})$.

\begin{exam}\label{adj}
For the adjoint representation $\frak{g}=\frak{sl}(n+1)$, the first
two homology bundles are $H_{0}(\frak{g}) = TM$ and
$H_{1}(\frak{g})=S^{2} T^*M$, and the first operator $D_{0}$ in the
BGG complex is the Killing operator,
\[
D_{0} : X \mapsto \mathcal{L}_{X} g,
\]
for $X \in \Gamma (TM)$. Theorem (\ref{MainIsom}) implies that, 
\[
H^{0}(M, \mathcal{T}_{\mathfrak{sl}(n+1)}) \cong H^{0}(\Gamma, \Lambda^{2} \mathbb{R}^{n+1}) \oplus H^{0} (\Gamma, S^{2}_{0} \mathbb{R}^{n+1}).
\]
The Vogan-Zuckerman theorem applied to the two groups on the righthand side means that $H^{0}(M, \mathcal{T}_{\mathfrak{sl}(n+1)})=0$ and so Theorem \ref{BGGCohom} 
recovers the well-known fact that there are no global Killing vectors on a hyperbolic manifold.
\end{exam}
These examples are typical of how BGG complexes will be useful in
studying the groups $H^{k}(\Gamma, \mathbb{E})$ and vice versa.  The
bundles in the BGG complex are tensor bundles, and the operators
acting between them are defined entirely using the Riemannian geometry
of $M$. This provides a nice link between the discrete group
cohomology and the geometry of $M$.
\begin{remk}
Apart from the the simplest representations, `manually' determining the homology bundles by examining the action of the $\partial^{*}$-operator is 
too difficult. However, there is an explicit algorithm, based on results of Kostant, for determining the bundles that occur in any given BGG complex with only 
the starting representation as initial data. 

In this algorithm, 
representations determining induced bundles are labelled by decorated Dynkin diagrams, and one uses the action of the affine Weyl group to determine which representations \textup{(}and in which order\textup{)} occur in the complex. See \cite{BastonEastwood}, especially $4.3$ and $8.5$. We have summarised the results of this algorithm, for the case of interest to this article, in Appendix \ref{Appendix}.

For detailed formulae for the operators occurring in the projective
BGG complex, see \cite{EastwoodGover}.  There is also a uniform method
for determining formulae for a large class of the operators that occur in BGG
complexes, for any parabolic geometry; see
\cite{CalderbankDiemerSoucek} for details.
\end{remk}

\section{Group cohomology and totally geodesic hypersurfaces}\label{MainNonVanishing}%%%%%%%%%%%%%%%%%%%%%%%%%%%%%%%%%%%%%%%%%%%%%%%%%%%%%%%

In this section, we will use the isomorphism between the tractor
connection and the (hyperbolic) canonical flat connection to prove a
non-vanishing theorem for the group cohomology of $\Gamma$ with
coefficients in $S^{k}_{0} \mathbb{R}^{n+1}$.  Firstly, we will
examine some of the special properties of the standard tractor bundle
over $M$, and how this relates to totally geodesic hypersurfaces in
$M$.

\subsection{The tractor metric}
%%%%%%%%%%%%%%%%%%%%%%%%%%%%%%%%%%%%%%%%%%%%%%%%%%%%

The standard tractor bundle on the hyperbolic manifold $M$ has important extra structure; it comes equipped with a signature $(n,1)$ metric, which will be denoted $h$. For $(X \; s)^{t}$ and $(Y \; t)^{t}$ in $\Gamma(\mathcal{T})$, the metric is given by,
\[
h
\big(
\left(
\begin{array}{c}
X \\
s \\
\end{array}
\right),
\left(
\begin{array}{c}
Y \\
t \\
\end{array}
\right)
\big) := g(X,Y) - st.
\]
Using equation (\ref{TracFormula}), the important result that, 
\begin{equation}\label{parallelmetric}
\nabla^{\mathcal{T}} h=0,
\end{equation}
is easily verified, and the metric $h$ will be used to given an isomorphism between $\mathcal{T}$ and its dual.

\begin{remk}
The existence of $h$ is a very special case of a much more general construction developed in \cite[Theorem $3.1$]{GoverMacbeth} \textup{(}see also \cite{Armstrong,CapGoverMacbeth}\textup{)}. 
In these papers, it is shown that the projective equivalence class of an Einstein metric yields a $\nabla^{\mathcal{T}}$-compatible metric on $\mathcal{T}$, and vice-versa.
The fact that the hyperbolic metric is Einstein gives the special case we are considering here.
\end{remk}

\subsection{The normal tractor}\label{normT}

In the next section, we will construct cocycles for $d^{\nabla^{\mathcal{T}}}: \Gamma( S^{k}_{0} \mathcal{T}) \rightarrow \Gamma (TM^{*} \otimes S^{k}_{0} \mathcal{T}) $ using totally geodesic hypersurfaces of $M$.  There
are infinite families of compact, oriented, hyperbolic manifolds with totally
geodesic hypersurfaces: Millson constructs arithmetic examples in
\cite{Millson2}. In Section \ref{nontriv}, the question of whether the
cocycles found are trivial in $H^{1}(\Gamma, S^{k}_{0} \mathbb{R}^{n+1})$ will
be investigated. A key tractor tool that we shall require is the {\em
  normal tractor} field, which is a tractor extension of the normal
vector field along a hypersurface.

Let $\iota: \Sigma \hookrightarrow M$ be a closed, compact, orientable
hypersurface in the hyperbolic manifold $(M,g)$. Let $N \in \Gamma(
TM|_{\Sigma})$ be the unit normal to $\Sigma$, with $g(N,N)=1$ at all
points of $\Sigma$.  The unit normal induces the decomposition of
vector bundles,
\[
TM|_{\Sigma} \cong T \Sigma \oplus N\Sigma,
\]
where the normal bundle $N \Sigma$ is a trivial line bundle and spanned pointwise by $N$.

Now, define the \emph{projective normal tractor} $\nu \in \Gamma(\mathcal{T}|_{\Sigma})$ 
to be,
\[
\nu :=
\left(
\begin{array}{c}
N \\
0 \\
\end{array}
\right).
\]
\begin{lemma}
The normal tractor $\nu$ is parallel along $\Sigma$ with respect to the tractor connection if and only if the surface $\Sigma$ is totally geodesic.
\end{lemma}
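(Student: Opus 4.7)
The proof is essentially a direct computation using the explicit formula (\ref{TracFormula}) for the tractor connection, combined with the standard characterisation of totally geodesic hypersurfaces via the second fundamental form. The plan is to apply $\nabla^{\mathcal{T}}_X$ to $\nu$ for $X\in \Gamma(T\Sigma)$, inspect each slot separately, and translate the resulting vanishing conditions into the statement that the Weingarten map of $\Sigma$ is zero.

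First, I would specialise formula (\ref{TracFormula}) to $\nu = \begin{pmatrix} N \\ 0 \end{pmatrix}$ and an arbitrary $X \in \Gamma(T\Sigma)$, obtaining
\[
\nabla^{\mathcal{T}}_{X} \nu \;=\; \begin{pmatrix} \nabla^{g}_{X} N \\ g(X,N) \end{pmatrix}.
\]
Since $N$ is normal to $\Sigma$ and $X$ is tangent, the bottom slot $g(X,N)$ vanishes identically along $\Sigma$, so the parallel condition reduces purely to a statement about the top slot, namely $\nabla^{g}_{X} N = 0$ for every $X\in \Gamma(T\Sigma)$.

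Next, I would observe that because $g(N,N)=1$ is constant along $\Sigma$, differentiating gives $g(\nabla^{g}_{X} N, N) = 0$, so $\nabla^{g}_{X} N$ already lies in $T\Sigma$. Thus the vanishing of $\nabla^{g}_{X} N$ for all $X \in \Gamma(T\Sigma)$ is equivalent to the vanishing of the Weingarten/shape operator $W \colon T\Sigma \to T\Sigma$, $W(X) := -\nabla^{g}_X N$, which in turn is equivalent to the vanishing of the second fundamental form $II(X,Y) = g(W(X),Y)$. This is precisely the definition of $\Sigma$ being totally geodesic in $(M,g)$.

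There is really no substantive obstacle here: the only step worth being careful about is the sign convention and the use of $|N|^2 \equiv 1$ to conclude that $\nabla^{g}_X N$ has no normal component, so that the single equation $\nabla^{g}_X N = 0$ genuinely captures total geodesicity rather than some weaker condition. Combining the two slot-wise calculations then yields the stated equivalence.
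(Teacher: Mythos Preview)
Your proof is correct and follows essentially the same approach as the paper: both compute $\nabla^{\mathcal{T}}_X \nu$ via formula (\ref{TracFormula}), note that the bottom slot $g(X,N)$ vanishes for tangential $X$, and reduce the statement to $\nabla^{g}_X N = 0$ for all $X\in\Gamma(T\Sigma)$ being equivalent to total geodesicity. Your version is slightly more detailed in spelling out, via $|N|^2\equiv 1$ and the Weingarten operator, why that last equivalence holds, whereas the paper simply asserts it.
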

\begin{proof}
Differentiate the projective normal tractor along the surface $\Sigma$ using the formula (\ref{TracFormula}) for the tractor connection $\nabla^{\mathcal{T}}$, to get, 
for any $X \in \Gamma( T \Sigma )$,
\[
\nabla_{X} \nu =
\left(
\begin{array}{c}
\nabla_{X} N \\
g(X,N) \\
\end{array}
\right).
\]
The desired result follows because the term $g(X,N)$ is zero for $X \in \Gamma( T \Sigma)$, and $\nabla_{X} N = 0$ if and only if $\Sigma$ is totally geodesic.
\end{proof}

The induced Riemannian metric $\tilde{g}$ on the hypersurface $\iota: \Sigma
\hookrightarrow M$ gives $\Sigma$  an intrinsic submanifold
projective structure and hence an intrinsic tractor connection.
In particular  
if the induced hypersurface metric is a  hyperbolic metric then 
$\Sigma$ has the structure of a hyperbolic manifold   $(\Sigma , \tilde{g} )\cong \Gamma' \backslash \mathbb{H}^{n-1}$. In this case the tractor connection arises  
using the construction given in section \ref{FlatModels}. 

As for the ambient standard tractor bundle, we have the canonical
splitting $\mathcal{T} \Sigma \cong T \Sigma \oplus \mathbb{R}$.  By
identifying $T \Sigma$ with a sub bundle of $TM|_{\Sigma}$ in the
usual way, we can evidently identify $\mathcal{T} \Sigma$ with a sub
bundle of $\mathcal{T}|_{\Sigma}$.  If the hypersurface $\Sigma$ is
totally geodesic, then the induced Riemannian metric on $\Sigma$ is a
hyperbolic metric, and the following Lemma shows that the inclusion of
$\mathcal{T} \Sigma$ into $\mathcal{T}|_{\Sigma}$ has nice properties:
\begin{lemma}\label{OrthogonalSplit}
For a totally geodesic hypersurface $\Sigma$, there is an orthogonal splitting (for the tractor metric $h$),
\[
\mathcal{T}|_{\Sigma} \cong \mathcal{T} \Sigma \oplus \mathcal{N} \Sigma,
\]
where $\mathcal{N}\Sigma$ is a trivial line bundle spanned pointwise by $\nu$.
The restriction of the ambient tractor connection $\nabla^{\mathcal{T}}$ to 
$\mathcal{T}|_{\Sigma}$ is a direct sum of $\nabla^{\mathcal{T} \Sigma}$ and the trivial connection on the line bundle $\mathcal{N} \Sigma$. 
\end{lemma}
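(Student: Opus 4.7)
The plan is to verify the two claims separately: first the pointwise orthogonal direct-sum decomposition, and then the compatibility of this decomposition with the tractor connection.

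For the orthogonal splitting, I would use the canonical splitting $\mathcal{T}|_\Sigma \cong TM|_\Sigma \oplus \mathbb{R}$ together with the Riemannian orthogonal splitting $TM|_\Sigma \cong T\Sigma \oplus N\Sigma$ to write any element of $\mathcal{T}|_\Sigma$ as $(X+aN, s)$ with $X \in T\Sigma$. Under the identifications $\mathcal{T}\Sigma \cong T\Sigma \oplus \mathbb{R}$ and $\mathcal{N}\Sigma = \langle \nu \rangle$ with $\nu = (N,0)^t$, this is exactly the decomposition $(X,s) + (aN,0) \in \mathcal{T}\Sigma \oplus \mathcal{N}\Sigma$. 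Orthogonality is then a one-line computation from the definition of the tractor metric: for $(X,s)\in \mathcal{T}\Sigma$ and $a\nu \in \mathcal{N}\Sigma$, one has $h((X,s), (aN,0)) = a\,g(X,N) - 0 = 0$ since $X \perp N$. Note also that $h(\nu,\nu) = 1$, so $\mathcal{N}\Sigma$ is non-degenerate and the splitting is genuinely orthogonal.

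For the compatibility of the tractor connection with the splitting, there are two things to check. First, for any $Y \in \Gamma(T\Sigma)$, the previous lemma gives $\nabla^{\mathcal{T}}_Y \nu = 0$ (this is precisely what ``totally geodesic'' buys us); hence for any smooth function $f$ on $\Sigma$,
\[
\nabla^{\mathcal{T}}_Y (f\nu) = Y(f) \nu,
\]
which is the trivial connection on $\mathcal{N}\Sigma$. Second, for $(X,s)$ a section of $\mathcal{T}\Sigma \subset \mathcal{T}|_\Sigma$ and $Y\in \Gamma(T\Sigma)$, I would apply formula (\ref{TracFormula}) to get
\[
\nabla^{\mathcal{T}}_Y \begin{pmatrix} X \\ s \end{pmatrix}
= \begin{pmatrix} \nabla^g_Y X + sY \\ Y(s) + g(Y,X) \end{pmatrix}.
\]
The Gauss formula gives $\nabla^g_Y X = \nabla^{\tilde{g}}_Y X + \mathrm{II}(Y,X)\,N$, and the totally geodesic hypothesis forces $\mathrm{II}\equiv 0$, so $\nabla^g_Y X \in \Gamma(T\Sigma)$. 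Since also $sY \in \Gamma(T\Sigma)$ and $g(Y,X) = \tilde{g}(Y,X)$, the right-hand side lies in $\mathcal{T}\Sigma$, and matches term for term the intrinsic tractor connection formula on $(\Sigma,\tilde{g})$ from Section \ref{FlatModels}. Hence $\nabla^{\mathcal{T}}|_{\mathcal{T}\Sigma} = \nabla^{\mathcal{T}\Sigma}$.

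The main obstacle, such as it is, lies in verifying that $\nabla^{\mathcal{T}}$ preserves the splitting rather than only being block-upper-triangular. The tangential block is controlled by the vanishing of $\mathrm{II}$ through the Gauss formula, while the normal block is controlled by the previous lemma asserting $\nabla^{\mathcal{T}}\nu=0$; once both are in hand, the statement that the connection is a direct sum (rather than an extension) is immediate because parallel transport preserves each factor. The fact that the tractor metric $h$ is parallel (equation (\ref{parallelmetric})) together with the parallelism of $\nu$ provides an alternative, basis-free way to see that the orthogonal complement $\mathcal{T}\Sigma = \nu^\perp$ is preserved by $\nabla^{\mathcal{T}}$ in directions tangent to $\Sigma$.
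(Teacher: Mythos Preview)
Your proof is correct and follows essentially the same approach as the paper's: both use the tangential/normal splitting of $TM|_\Sigma$, verify orthogonality from the tractor metric formula, and use the Gauss formula together with the totally geodesic hypothesis to identify the tangential piece of the ambient tractor connection with the intrinsic one. Your treatment is slightly more explicit in separating the normal-bundle check (invoking the previous lemma directly) and in spelling out the orthogonality computation, but the underlying argument is identical.
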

\begin{proof}
Along the hypersurface $\Sigma$, for $Y \in \Gamma(TM)$ and $s \in C^{\infty}(M)$, a section $(Y \;  s)^{t} \in \Gamma(\mathcal{T}|_{\Sigma})$ splits,
\[
\left(
\begin{array}{c}
Y \\
s \\
\end{array}
\right) = 
\left(
\begin{array}{c}
Y^{\top}\\
s
\end{array}
\right) + 
\left(
\begin{array}{c}
Y^{\perp} \\
0 
\end{array}
\right),
\]
where $Y^{\top}$ and $Y^{\perp}$ are the tangential and normal components of $Y$ respectively. 
Sections in the first summand are sections of $\mathcal{T} \Sigma$, using the identification above. Direct calculation using the metric $h$ show that this splitting is orthogonal.

Recall that $\tilde{g}$ be the induced metric on $\Sigma$. If $X \in
\Gamma(T \Sigma)$ is a tangential vector field, then the formula for
the ambient tractor connection gives that,
\begin{align}\nonumber
\nabla^{\mathcal{T}}_{X}
\left(
\begin{array}{c}
Y \\
s \\
\end{array}
\right) &= 
\left(
\begin{array}{c}
\nabla^{g}_{X} Y + s X \\
X(s) + g(X,Y) \\
\end{array}
\right)
=
\left(
\begin{array}{c}
\nabla^{g}_{X} Y^{\top} + \nabla^{g}_{X} Y^{\perp} + s X \\
X(s) + g(X, Y^{\top}) + g(X, Y^{\perp}) \\
\end{array}
\right) \\ \nonumber
&= 
\left(
\begin{array}{c}
\nabla^{\tilde{g}}_{X} Y^{\top} + sX \\ 
X(s) + \tilde{g}(X, Y^{\top}) \\
\end{array}
\right) 
+
\left(
\begin{array}{c}
\nabla_{X}^{g} Y^{\perp} \\
0 \\ 
\end{array}
\right), \\ \nonumber
\end{align}
where the Gauss formula, and the fact that $\Sigma$ is totally
geodesic were used to give that $\nabla^{g}$ can be replaced with
$\nabla^{\tilde{g}}$. The expression in the first summand of the
second line of the preceding equation is the formula for
$\nabla^{\mathcal{T} \Sigma}$, according to equation \ref{TracFormula}
\end{proof}

\begin{lemma}\label{NormalExt} Let $U$ be a tubular neighbourhood of a 
compact, totally geodesic hypersurface $\Sigma$.
 The normal tractor of $\Sigma$ can be extended to a section
 $\nu\in \Gamma (\mathcal{T}|_U)$ that is parallel.
\end{lemma}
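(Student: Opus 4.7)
The plan is to propagate $\nu$ off $\Sigma$ by $\nabla^{\mathcal{T}}$-parallel transport along the geodesics normal to $\Sigma$, and then use the flatness of $\nabla^{\mathcal{T}}$ to verify that the resulting extension is parallel in \emph{all} tangent directions on $U$, not just along the normal geodesics used to define it.

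First I would shrink $U$ if necessary so that the normal exponential map is a diffeomorphism $\exp\colon N_\epsilon \Sigma \to U$; this is possible because $\Sigma$ is compact and embedded. This yields the usual foliation of $U$ by the equidistant hypersurfaces $\Sigma_r$ together with the geodesic radial vector field $\partial_r$, which restricts to $N$ on $\Sigma$. For each $p \in \Sigma$ I would then define $\nu$ along the geodesic $r \mapsto \exp_p(r N_p)$ to be the $\nabla^{\mathcal{T}}$-parallel transport of $\nu(p)$. Smooth dependence of parallel transport on initial conditions, combined with the fact that $\exp$ is a diffeomorphism onto $U$, gives a smooth section $\nu \in \Gamma(\mathcal{T}|_U)$, and by construction $\nabla^{\mathcal{T}}_{\partial_r} \nu = 0$ on all of $U$.

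It remains to check that $\nabla^{\mathcal{T}}_X \nu = 0$ for $X$ tangent to the leaves $\Sigma_r$. Here I would invoke the flatness of $\nabla^{\mathcal{T}}$: any such $X$ may be extended to a vector field on $U$ that commutes with $\partial_r$, for instance by pushing forward via $\exp$ a vector field on $\Sigma$ made constant in the normal direction. Vanishing curvature together with $[\partial_r, X] = 0$ then yields
\[
\nabla^{\mathcal{T}}_{\partial_r}\bigl(\nabla^{\mathcal{T}}_X \nu\bigr) \;=\; \nabla^{\mathcal{T}}_X\bigl(\nabla^{\mathcal{T}}_{\partial_r} \nu\bigr) \;=\; 0,
\]
so $\nabla^{\mathcal{T}}_X \nu$ is itself parallel along each normal geodesic. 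At $r = 0$, the preceding Lemma supplies $\nabla^{\mathcal{T}}_X \nu|_{\Sigma} = 0$, since $X|_\Sigma \in \Gamma(T\Sigma)$ and $\Sigma$ is totally geodesic. Parallel transport along the normal geodesics therefore forces $\nabla^{\mathcal{T}}_X \nu$ to vanish throughout $U$, and combining with $\nabla^{\mathcal{T}}_{\partial_r} \nu = 0$ gives $\nabla^{\mathcal{T}} \nu = 0$ on all of $U$.

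The only real subtlety I anticipate lies in the setup of the tubular neighbourhood: one must choose $U$ small enough that $\exp$ is a diffeomorphism and that the radial ODE defining $\nu$ has solutions on all of $U$. Both conditions are standard given that $\Sigma$ is compact. Once the tubular neighbourhood is properly arranged the remainder of the argument collapses to the one-line flatness/commutator identity above, and no appeal to the explicit formula \eqref{TracFormula} for the tractor connection is needed.
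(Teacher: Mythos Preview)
Your argument is correct and is essentially a detailed unpacking of the paper's own proof, which reads in its entirety: ``This is elementary, since the tractor connection $\nabla^{\mathcal{T}}$ is flat.'' You have supplied precisely the details the paper suppresses --- propagating $\nu$ off $\Sigma$ by parallel transport along the normal fibration and then using flatness plus the preceding Lemma to upgrade radial parallelism to full parallelism --- so there is no genuine methodological difference, only a difference in level of detail.
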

\begin{proof}
This is elementary, since the tractor connection $\nabla^{\mathcal{T}}$ is flat. 
\end{proof}
The section $\nu \in \Gamma ( \mathcal{T}|_{U})$ constructed in the preceding Lemma can be extended to a smooth section of $\mathcal{T}\to M$ with support containing $U$, and we will henceforth use the same symbol $\nu$ for this section.

\subsection{Constructing tractor cocycles}\label{constcocyv}
For any smooth compact hypersurface $\iota: \Sigma \hookrightarrow M
$, recall that the {\em Poincar\'{e} dual} of $\Sigma$ is the
cohomology class in $H^{1}(M, \mathbb{R})$ determined by the property
that, for any one-form $\omega_{\Sigma}$ representing this class in de
Rham cohomology,
\[
\int_{\Sigma} \iota^{*} \eta = \int_{M} \eta \wedge \omega_{\Sigma},
\]
for all closed $(n-1)$-forms $\eta$ on $M$.

Additionally \cite{BottTu}, it possible to choose a representative one-form for the Poincar\'{e} dual whose support can be shrunk to any given tubular neighbourhood of $\Sigma$.
These properties, and Lemma (\ref{NormalExt}) give that:

\begin{lemma}\label{keyL}
Define $\tau:= (\nu \otimes \nu \otimes \ldots \nu)_{0}$ to be the trace-free part of, 
\[
\underbrace{( \nu \otimes \nu \otimes \ldots \otimes \nu)}_{k-times}.
\]
If $\Sigma$ is totally geodesic, and $\omega_{\Sigma}$ is a Poincare dual of $\Sigma$ with support contained in the region where $\nu$ is parallel, then the tractor valued one-form $\omega_{\Sigma} \otimes \tau \in \Gamma( TM^{*} \otimes \mathcal{T})$ defines a cocycle for $d^{\nabla^{\mathcal{T}}}$ and hence, using theorem \ref{MainIsom}, an element of $H^{1}(\Gamma , S^{k}_{0}\mathbb{R}^{n+1})$.
\end{lemma}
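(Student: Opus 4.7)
The plan is straightforward: one verifies the cocycle condition directly using that $\nu$ is parallel on the tubular neighbourhood and that the Poincar\'{e} dual representative $\omega_\Sigma$ is closed with support inside this neighbourhood.

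First I would check that $\tau$ is parallel on the open set $U\subset M$ on which $\nu$ is parallel (the one produced by Lemma \ref{NormalExt}). The tractor connection $\nabla^{\mathcal{T}}$ acts as a derivation on tensor powers, so $\nabla^{\mathcal{T}}(\nu\otimes\cdots\otimes\nu)=0$ on $U$ by the Leibniz rule. Moreover, since the tractor metric $h$ is parallel by (\ref{parallelmetric}), the trace operation and the projection onto the trace-free part $S^{k}_{0}\mathcal{T}\subset S^{k}\mathcal{T}$ commute with $\nabla^{\mathcal{T}}$; hence $\nabla^{\mathcal{T}}\tau=0$ on $U$.

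Next I would compute $d^{\nabla^{\mathcal{T}}}(\omega_{\Sigma}\otimes \tau)$ using the graded Leibniz rule for the twisted exterior derivative:
\[
d^{\nabla^{\mathcal{T}}}(\omega_{\Sigma}\otimes \tau)\;=\;d\omega_{\Sigma}\otimes\tau\;-\;\omega_{\Sigma}\wedge\nabla^{\mathcal{T}}\tau.
\]
The first summand vanishes because $\omega_\Sigma$, as a de Rham representative of a cohomology class, is closed. For the second summand, the hypothesis that $\operatorname{supp}(\omega_{\Sigma})$ is contained in the region where $\nu$ (and hence $\tau$) is parallel means that at every point of $M$ either $\omega_\Sigma$ vanishes or $\nabla^{\mathcal{T}}\tau$ vanishes; in either case the wedge product is zero pointwise. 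Thus $\omega_{\Sigma}\otimes\tau$ is a $d^{\nabla^{\mathcal{T}}}$-cocycle in $\Gamma(TM^{\ast}\otimes S^{k}_{0}\mathcal{T})$.

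Finally, to interpret this class as an element of $H^{1}(\Gamma, S^{k}_{0}\mathbb{R}^{n+1})$, I would apply Theorem \ref{MainIsom} with $\mathbb{E}=S^{k}\mathbb{R}^{n+1}$: the parallel subbundle $S^{k}_{0}\mathcal{T}\subset S^{k}\mathcal{T}$ corresponds under the isomorphism $(F,\nabla^{dev})\cong(\mathcal{T}_{\mathbb{E}},\nabla^{\mathcal{T}_{\mathbb{E}}})$ to the flat subbundle associated to the $SO(n,1)$-submodule $S^{k}_{0}\mathbb{R}^{n+1}$ of $S^{k}\mathbb{R}^{n+1}$. Running the argument of Proposition \ref{preview} verbatim on this sub-local-system identifies the twisted de Rham cohomology of $S^{k}_{0}\mathcal{T}$ with $H^{\bullet}(\Gamma, S^{k}_{0}\mathbb{R}^{n+1})$, so $[\omega_{\Sigma}\otimes\tau]$ gives the desired class. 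The only genuine subtlety, and the step I would be most careful with, is the parallelism of the trace-free projection, which rests on the parallelism of $h$; whether the resulting class is nonzero is a separate issue, to be taken up in Section \ref{nontriv}.
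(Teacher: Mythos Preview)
Your argument is correct and is exactly what the paper has in mind: in the text the lemma is presented as an immediate consequence of the closedness and support of $\omega_\Sigma$ together with Lemma \ref{NormalExt}, and your explicit verification via the graded Leibniz rule for $d^{\nabla^{\mathcal{T}}}$, the parallelism of $h$ (hence of the trace-free projection), and Proposition \ref{preview} simply spells out those implicit steps.
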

The question that we will next address is whether the element of $H^{1}(\Gamma, S^{k}_{0} \mathbb{R}^{n+1})$ defined by $\omega_{\Sigma} \otimes \tau$ 
is trivial or not. Notice that the relationship to the
topology of $\Sigma$ is not straightforward, in the sense that even if $[\Sigma]=0 \in H_{n-1}(M, \mathbb{R})$, it does not directly follow that the cocycle defined by $\omega_{\Sigma} \otimes \tau$ is trivial in cohomology.
\subsection{Non-triviality}\label{nontriv}

In this section we will prove that the cocycle constructed in Lemma \ref{keyL} associated to a
totally geodesic, compact hypersurface is nontrivial in cohomology.
\begin{thm}\label{MainTheorem}
If a compact hyperbolic manifold $M$ contains an orientable, compact, totally geodesic hypersurface $\Sigma$, then $H^{1}( \Gamma, S^{k}_{0} \mathbb{R}^{n+1}) \neq 0$, for $k \geq 1$.
\end{thm}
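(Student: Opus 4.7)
The plan is to prove non-triviality of $[\alpha] := [\omega_\Sigma \otimes \tau] \in H^1(M; S^k_0 \mathcal{T}) \cong H^1(\Gamma; S^k_0 \mathbb{R}^{n+1})$ via the long exact sequence of the pair $(M, M\setminus \Sigma)$, combined with a density argument ruling out the possible obstructions.

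First I would recognise $[\alpha]$ as the image of $[\tau|_\Sigma] \in H^0(\Sigma; (S^k_0 \mathcal{T})|_\Sigma)$ under the composition of the twisted Thom isomorphism
\[
H^0(\Sigma; (S^k_0 \mathcal{T})|_\Sigma) \xrightarrow{\sim} H^1(M, M\setminus\Sigma; S^k_0 \mathcal{T})
\]
with the natural map $H^1(M, M\setminus\Sigma; S^k_0\mathcal{T}) \to H^1(M; S^k_0\mathcal{T})$. By Lemma \ref{OrthogonalSplit}, $\tau|_\Sigma$ is a genuine non-zero parallel section of the trivial line-subbundle $\langle\tau\rangle \subset (S^k_0\mathcal{T})|_\Sigma$, and the Thom-form representative $\omega_\Sigma$ (supported in a tubular neighbourhood $U$ where $\nu$ is parallel) realises this composition explicitly as the cocycle $\omega_\Sigma\otimes\tau$.

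Second, I would use the long exact sequence of the pair,
\[
H^0(M; S^k_0 \mathcal{T}) \to H^0(M\setminus\Sigma; S^k_0 \mathcal{T}) \xrightarrow{\delta} H^1(M, M\setminus\Sigma; S^k_0 \mathcal{T}) \to H^1(M; S^k_0 \mathcal{T}),
\]
to reduce $[\alpha] \neq 0$ to the statement $[\tau|_\Sigma] \notin \mathrm{im}(\delta)$. The leftmost term vanishes by \eqref{AllVanish} applied to $M$, so $\delta$ has zero kernel, and one needs $H^0(M\setminus\Sigma; S^k_0\mathcal{T}) = 0$. Via Theorem \ref{MainIsom}, the group $H^0$ on each connected component $M'$ of $M\setminus\Sigma$ equals the space of $\pi_1(M')$-invariants in the irreducible $SO(n,1)$-module $S^k_0 \mathbb{R}^{n+1}$.

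Third, I would establish this vanishing by showing $\pi_1(M')$ is Zariski-dense in $SO_o(n,1)$ for each component. Geometrically, the lift of $M'$ to $\mathbb{H}^n$ is a region bounded by infinitely many distinct hyperplanes (the $\Gamma$-translates of $\tilde\Sigma$), and $\pi_1(M')$ necessarily contains elements pairing many of these hyperplanes together with bulk hyperbolic isometries of $M$. The only proper algebraic subgroups of $SO_o(n,1)$ either stabilise a single totally geodesic subspace of $\mathbb{H}^n$ (which the multi-hyperplane boundary structure of $\widetilde{M'}$ forbids) or are compact (which is incompatible with any infinite discrete subgroup of $SO_o(n,1)$). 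By a Borel-density argument, a Zariski-dense subgroup of $SO_o(n,1)$ has the same invariant vectors in any rational representation as the full group, and $SO_o(n,1)$-invariants in the irreducible non-trivial module $S^k_0 \mathbb{R}^{n+1}$ are zero for $k\geq 1$. Therefore $H^0(M\setminus\Sigma; S^k_0 \mathcal{T}) = 0$, $\delta = 0$, and $[\alpha]\neq 0$.

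The principal obstacle is establishing Zariski-density of $\pi_1(M')$ in $SO_o(n,1)$ for each complementary component $M'$ of $M\setminus\Sigma$. Although geometrically transparent --- the infinite multi-hyperplane boundary of $\widetilde{M'}$ in $\mathbb{H}^n$ forces its stabiliser to escape every proper algebraic subgroup --- a careful proof may require an explicit analysis of the fundamental polytope, or an appeal to known geometric-finiteness results for bordered hyperbolic manifolds. An alternative but equivalent route, closer in spirit to the tractor calculus of this paper, is a direct Stokes pairing: construct a globally-closed twisted $(n-1)$-form $\beta$ on $M$ with $\int_\Sigma h(\tau|_\Sigma, \iota^*\beta) \neq 0$ as a Thom-type extension of $\tau\otimes\mathrm{vol}_\Sigma$ corrected by an exact form (the correction exists because Poincar\'e duality and \eqref{AllVanish} force $H^n(M; S^k_0 \mathcal{T}) = 0$), and deduce via Stokes on the closed manifold $M$ the contradiction that an exact $\alpha$ would give $0 = \int_M h(\alpha, \beta) = h(\tau, \tau)\,\mathrm{vol}(\Sigma) > 0$, the positivity coming from $h(\nu,\nu)=1$.
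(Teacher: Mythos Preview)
Your reduction via the long exact sequence of the pair $(M, M\setminus\Sigma)$ is correct up to the point where you need $H^0(M\setminus\Sigma; S^k_0\mathcal{T}) = 0$, but that step is a genuine gap. The vanishing \eqref{AllVanish} is available only for \emph{compact} hyperbolic manifolds, whereas the components $M'$ of $M\setminus\Sigma$ are open and of infinite volume. Your Zariski-density sketch for $\pi_1(M')$ is plausible and in the end true, but the sentence ``the multi-hyperplane boundary structure forbids stabilising a single totally geodesic subspace'' is not a proof; making it one requires an argument of a rather different character (limit sets, or the classification of algebraic subgroups of $SO(n,1)$) that you have not supplied. Your alternative Stokes route has the same problem in a dual guise: once you write $\beta = \tilde\beta - \gamma$ with $d^{\nabla}\gamma = d^{\nabla}\tilde\beta$ (which is what $H^n=0$ actually gives you; $\gamma$ itself is not exact), you lose control of $\int_\Sigma h(\tau,\iota^*\gamma)$, so the asserted equality $\int_M h(\alpha,\beta) = h(\tau,\tau)\,\mathrm{vol}(\Sigma)$ does not follow.

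The paper's argument avoids the non-compact vanishing question entirely, by exploiting the hypothetical \emph{global} primitive. Assuming $\omega_\Sigma\otimes\tau = \nabla^{\mathcal{T}}\sigma$ for some $\sigma\in\Gamma(S^k_0\mathcal{T})$, one has $\sigma$ parallel on $M\setminus U'$, and parallel transport across the tube produces parallel sections $\sigma_\pm$ on two overlapping open sets $V_\pm$ with $V_+\cup V_- = M$, each containing $\Sigma$, and with $\sigma_+=\sigma_-=\sigma$ on $M\setminus U$. Now restrict to the \emph{compact} hyperbolic hypersurface $\Sigma$: by Lemma~\ref{OrthogonalSplit} and \eqref{AllVanish} applied to $\Sigma$, any parallel section of $S^k_0\mathcal{T}|_\Sigma$ is a constant multiple of $\tau$, so $\sigma_\pm|_\Sigma = c^\pm\tau$. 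Since $h$ is parallel and $\sigma_+=\sigma_-$ on $M\setminus U$, one gets $(c^+)^2=(c^-)^2$, whence $\sigma_+\otimes\sigma_+$ and $\sigma_-\otimes\sigma_-$ agree on $V_+\cap V_-$ and patch to a global parallel section of $\otimes^{2k}\mathcal{T}$ with nonzero $S^{2k}_0$-component, contradicting \eqref{AllVanish} on $M$. The essential difference from your approach is that the paper reduces to vanishing on the compact manifolds $\Sigma$ and $M$, where \eqref{AllVanish} is already in hand, rather than on the non-compact complement.
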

\begin{proof}
Recall that the normal tractor $\nu$ constructed in Lemma \ref{NormalExt} is parallel in a tubular neighbourhood $U$ of $\Sigma$. Fix a diffeomorphism that identifies $U$ with $\Sigma \times \mathbb{R}$, and use the notation $\pi$ for the projection onto the first factor. Choose a Poincar\'{e} dual form $\omega_{\Sigma}$ so that the 
the support of $\omega_{\Sigma}$ is the open set, 
\[
U' := \{ (x,t) \in U \cong \Sigma \times \mathbb{R} \; | \; -1 < t < 1 \}.
\] 
(This is possible, see Proposition $6.25$ of \cite{BottTu}).
Define $\tau:= (\nu \otimes \nu \otimes \ldots \nu)_{0}$ as above. The section $\tau$ is a trace-free element of $\Gamma(S^{k} \mathcal{T})$ which is parallel on $U$.

Consider the cocycle $\omega_{\Sigma} \otimes \tau$, and the corresponding element $[\omega_{\Sigma} \otimes \tau ] \in H^{1}(M, S^{k}_{0} \mathcal{T})$. 
Theorem \ref{MainIsom} shows that to prove Theorem \ref{MainTheorem} it is sufficient to show that $[ \omega_{\Sigma} \otimes \tau]$ is nontrivial in $H^{1}(M, S^{k}_{0} \mathcal{T})$.
This will be achieved by showing that there is no section $\sigma \in \Gamma(S_{0}^{k} T)$ with $\nabla^{\mathcal{T}} \sigma = \omega_{\Sigma} \otimes \tau$. 

Assume, with a view to contradiction, that such a section $\sigma$ 
exists. 
Now, for a small positive $\epsilon \in \mathbb{R}$, define the two subsets, 
\[
V_{+} := \{ (x,t) \in U \; | \; t > - \epsilon \} \cup (M \setminus U), \; V_{-} := \{ (x,t) \in U \; | \: t < \epsilon \} \cup (M \setminus U).
\]
Since $\nabla^{\mathcal{T}} \sigma = \omega_{\Sigma} \otimes \tau$, it follows that $\sigma$ is parallel over $M \setminus U'$. It is not necessarily parallel on the sets $V_{+}$ and $V_{-}$.
Our strategy will be to use parallel transport along the fibres of $\Sigma \times \mathbb{R}$ from $M \setminus U'$ to $V_{+}$ and $V_{-}$, in order to extend $\sigma|_{M \setminus U'}$ into parallel sections $\sigma_{\pm}$ over $V_{\pm}$. This will then be shown to lead to a contradiction. 

\begin{enumerate}
\item[\underline{Step 1}:]
To implement this construction, we first need to show that $\sigma|_{M \setminus U'}$ is not trivial. 
To do this, use that, for any $X \in \Gamma(TM|_{U})$,
\[
X.h(\sigma, \tau) = h(\nabla_{X}^{\mathcal{T}} \sigma, \tau) = h(\tau,\tau) \omega_{\Sigma}(X). 
\]
Now recall that $\nu$ is length $1$, so that $h(\tau,\tau) = c_{n,k}$ is constant, and so $\omega_{\Sigma}$ is exact on this region. For some $x=(x,0) \in \Sigma \times \mathbb{R}$, integrate $\omega_{\Sigma}$ along the line $l(t):= (x, -1-\epsilon +(1+\epsilon)2t)$, and use the 
fact \cite[1.6]{BottTu} that the integral of $\omega_{\Sigma}$ along the fibre of $\Sigma \times \mathbb{R}$ is equal to one to get,
\begin{equation}\label{sigmanotzero}
1= \int_{l} \omega_{\Sigma} = \frac{1}{c_{n,k} }   \left( h(\sigma, \tau)_{(x,1+\epsilon)} -  h(\sigma, \tau)_{(x,-1-\epsilon)} \right).
\end{equation}
This implies that $\sigma$ is not identically zero in the complement of $U'$. In fact, since $\sigma$ is parallel outside of $U'$ this shows that $\sigma$ never equals $0$ on $M \setminus U'$. 

Now we will use $\sigma$ to define sections on $V_{\pm}$.
\item[\underline{Step 2}:]
This is done as follows: choose a point $p=(x,1 + \epsilon) \in U \setminus U'$ and a simply connected open set $W \subset \Sigma$, with $x \in W$. 
We can define a section, $\sigma_{+}$ on the set $\{(w,t) \in U \; | \; w \in W, t > -\epsilon \}$ by parallel transport of the vector $\sigma(p) \in \mathcal{T}_{p}$ along any path 
in $\{ (w,t) \in U \; | \; w \in W, t > -\epsilon \}$. Since the tractor connection $\nabla^{\mathcal{T}}$ is flat, the precise path is irrelevant, since parallel transport only depends on the homotopy class of a path. 

For another simply connected set $W' \subset \Sigma$ with $x' \in W'$, and $p'=(x', 1+ \epsilon) \in U \setminus U'$, we can similarly define a section $\sigma'_{+}$ on $\{ (w',t) \in U \; | \; w' \in W', \; t > -\epsilon \}$. 
The sections $\sigma_{+}$ and $\sigma'_{+}$ thus defined will \emph{agree} on $\{ (w,t) \in U \; | \; w \in W \cap W', \; t > - \epsilon \}$. 

To see why, notice that, for any $w \in W \cap W'$, there is a path that joins $p$ with $w$ and passes through $(\pi(w),1+\epsilon)$. 
Specifically, if $\gamma(s):[0,1] \mapsto \Sigma$ is a path joining $x$ with $\pi (w)$, then $(\gamma(s),1+\epsilon)$, concatenated with translation along the fibre $\mathbb{R}_{\pi (w)}$ is the required path. 
Likewise, there is a similar path joining $p'$ with $w$, which passes through $(\pi(w),1+\epsilon)$. Since $\sigma$ is parallel outside of $U'$ and the tractor connection $\nabla^{\mathcal{T}}$ is flat, these paths show that $\sigma_{+}(w)=\sigma'_{+}(w)$, because they are both equal to the parallel transport of $\sigma_{(\pi(w),1+\epsilon)}$ down the fibre $\mathbb{R}_{\pi(w)}$.
Thus, $\sigma_{+}=\sigma'_{+}$ on $W \cap W'$. 
Using a covering $\{ W_{i} \}$ of $\Sigma$ by simply-connected open sets, continue in this way to define a section $\sigma_{+}$ on $\{(x,t) \in U \; | \; t > -\epsilon \}$.

Similarly, we can define a parallel section $\sigma_{-}$ on $\{ (x,t) \in U \; | \; t < \epsilon\}$, by starting at a point $q=(x,-1-\epsilon)$ and using parallel transport of $\sigma(q)$ to define a section $\sigma_{-}$ on 
$\{ (w,t) \; | \; w \in W, t < \epsilon \}$. Then, continue as before, using a covering $\{W_{i} \}$ of $\Sigma$ to define a parallel section $\sigma_{-}$ on $\{ (x,t) \in U \; | t < \epsilon \}$.

By declaring $\sigma_{\pm}(p):=\sigma(p)$ at all points $p\in M
\setminus U$, we get two smooth parallel sections on $V_{\pm}$, since
by construction $\sigma_{\pm}$ agrees with $\sigma$ on $U \setminus
U'$.

%%%%%%%%%%%%%%%%%%%%%%%%%%%%%%%%%%%%%%%%%%%%%%%%%%%%%%%%%%%%%%%%%%%%%%%%%%%%%%%%%%%

\item[\underline{Step 3}:] 
Now, we will show that $\sigma_{+} = c^{+} \tau$, for some constant $c^{+}$. 
To see this, notice that Lemma \ref{OrthogonalSplit} shows that there is an orthogonal splitting of $S^{k}_{0} \mathcal{T}|_{\Sigma}$ into a direct sum; with summands either of the form $S^{l}_{0} \mathcal{T} \Sigma$, with $1 \leq l \leq k$, 
or of the form,
\[
\underbrace{(\mathcal{N} \Sigma \otimes \mathcal{N} \Sigma \otimes \ldots \otimes \mathcal{N} \Sigma)_{0}}_{k \; times}.
\]
Moreover, with the induced Riemannian metric, $\Sigma$ is itself a compact hyperbolic manifold $\Gamma' \backslash \mathbb{H}^{n-1}$. Theorem \ref{MainIsom} therefore shows that the orthogonal projection of the parallel section 
$\sigma^{+}$ onto any of the summands $S^{l}_{0} \mathcal{T} \Sigma$ gives an element of $H^{0} (\Gamma', S^{l}_{0} \mathbb{R}^{n+1})$. But this must be zero, because the case of the Vogan-Zuckerman theorem discussed in equation \ref{AllVanish}, section \ref{VoganZuckerman} (and also remark \ref{thomasD}). So $\sigma_{+}$ has no component in $S^{l}_{0} \mathcal{T} \Sigma$, which means that $\sigma_{+}=c^{+} \tau$, for some constant $c_{+}$. The constant $c_{+}$ must be nonzero, because $\sigma$ is nowhere zero on $M \setminus U'$ (which we showed above using Equation \ref{sigmanotzero}), and $\sigma_{+}$ agrees with $\sigma$ outside of $U'$.

The same reasoning applied to $\sigma_{-}$ means that $\sigma_{-}=c^{-} \tau$ for some (necessarily non-zero) constant $c^{-}$. 

\item[\underline{Step 4}:] 
From the fact (Equation \ref{parallelmetric}) that $\nabla^{\mathcal{T}} h=0$, we get that $c^{+} = \pm c^{-}$. 
This means that $\sigma_{+} \otimes \sigma_{+}$ and $\sigma_{-} \otimes \sigma_{-}$ are parallel sections of $\otimes^{2k} \mathcal{T}|_{V_{+}}$ and $\otimes^{2k} \mathcal{T}|_{V_{-}}$, respectively, which agree on the overlap $V_{+} \cap V_{-}$.  This means that there is a gobally parallel section $\varpi \in \Gamma( \otimes^{2k} \mathcal{T})$ such that $\varpi=\sigma_{+} \otimes \sigma_{+}$ on $V_{+}$ and $\varpi=\sigma_{-} \otimes \sigma_{-}$ on $V_{-}$.

Since $\varpi$ locally agrees with $\sigma_{+} \otimes \sigma_{+}$ or $\sigma_{-} \otimes \sigma_{-}$, the trace-free, symmetric part of $\varpi$ is not zero. But then Theorem 
\ref{MainIsom} means that there is a nonzero element of $H^{0} (\Gamma, S^{2k}_{0} \mathbb{R}^{n+1})$, which contradicts the case of the Vogan-Zuckerman theorem in equation \ref{AllVanish}, section \ref{VoganZuckerman}.
\end{enumerate}  
\end{proof}
\newpage

\appendix
\section{General formulae for bundles occuring in the BGG complex}\label{Appendix}
To describe the end result of the algorithmic method of determining BGG complexes, let's fix some notation from representation theory. Fix a Cartan subalgebra $\mathfrak{h} \leq \mathfrak{g}$, and let $\{ \alpha_{i} \}$ be a set of simple roots for $( \mathfrak{g}, \mathfrak{h})$. Let $\alpha_{i}^{\vee}$ be a corresponding set of co-roots, so that $\alpha_{i}^{\vee} = 2 \alpha_{i} / (\alpha_{i}, \alpha_{i})$, and $\varpi_{i}$ be the corresponding fundamental weights, with $(\varpi_{i}, \alpha_{j}^{\vee}) = \delta_{ij}$.

We will use the correspondence between $SL(n+1,\mathbb{R})$-representations and highest weight vectors in $\mathfrak{h}^{*}$, which will be written in the basis $\{ \varpi_{i} \}_{i=1}^{n} \in \mathfrak{h}^{*}$. Furthermore, the vector in $\mathfrak{h}^{*}$ labelling a representation will also denote the corresponding associated bundle to $\mathcal{G}_{\Gamma}$. With these conventions, the BGG complex on $\Gamma \backslash \mathbb{H}^{n}$ has the following bundles (in this order), 
\begin{align} \nonumber
(a_{1}, a_{2}, \ldots, a_{n}) &\to (a_{1}-2, a_{1} +a_{2} +1, a_{3}, \ldots, a_{n}) \\ \nonumber
&\to (a_{1}-a_{2}-3, a_{1}, a_{2} + a_{3} +1, a_{4}, \ldots, a_{n}) \\ \nonumber
&\to (a_{1}-a_{2}-a_{3}-4,a_{1},a_{2},a_{3}+a_{4}+1,a_{5}, \ldots, a_{n}) \to  \cdots \\ \nonumber
&\to (-a_{1}-a_{2}-\cdots - a_{n-1} -n, a_{1},a_{2}, \ldots , a_{n-2}, a_{n-1} + a_{n} +1)   \\ \nonumber
&\to (-a_{1} -a_{2} - \ldots - a_{n} - (n+1),a_{1}, a_{2}, \ldots, a_{n-1}). \\ \nonumber
\end{align}

\newpage

\end{document}